\documentclass[10pt,A4]{article}
\usepackage[latin5]{inputenc}
\usepackage{amsmath,amssymb}
\usepackage{amsthm}
\usepackage{indentfirst}

\newtheorem{theorem}{Theorem}

\newtheorem{proposition}{Proposition}
\newtheorem{remark}{Remark}
\numberwithin{definition}{section} \numberwithin{theorem}{section}
\numberwithin{lemma}{section}\numberwithin{corollary}{section}
\numberwithin{equation}{section} \numberwithin{example}{section}
\numberwithin{proposition}{section} \numberwithin{remark}{section}
%SAYFA  AYARLARI
\oddsidemargin  0.8cm \evensidemargin 1.0cm \topmargin 0.5cm
\headsep 0in \headheight 0in \textheight 22cm \textwidth 15.5cm

\begin{document}

\begin{center}

{\bf \Large Continuity of the  $L_{p}$ Balls and an Application to Input-Output System Described by the Urysohn Type Integral Operator}

\vspace{3mm}

Anar Huseyin$^1$, Nesir Huseyin$^2$, Khalik G. Guseinov$^3$

\vspace{3mm}
{\small $^1$Cumhuriyet University, Faculty of Science, Department of Statistics and Computer Sciences \\ 58140 Sivas, TURKEY

e-mail: ahuseyin@cumhuriyet.edu.tr
\vspace{2mm}

$^2$Cumhuriyet University, Faculty of Education, Department of Mathematics and Science Education \\ 58140 Sivas, TURKEY

e-mail: nhuseyin@cumhuriyet.edu.tr

\vspace{2mm}

$^3$Eskisehir Technical  University, Faculty of Science, Department of Mathematics \\ 26470 Eskisehir, TURKEY

e-mail: kguseynov@eskisehir.edu.tr}

\end{center}

\textbf{Abstract.} In this paper the continuity of the set valued map $p\rightarrow B_{\Omega,\mathcal{X},p}(r),$ $p\in (1,+\infty),$ is proved where $B_{\Omega,\mathcal{X},p}(r)$ is the closed ball of the space $L_{p}\left(\Omega,\Sigma,\mu; \mathcal{X}\right)$ centered at the origin with radius $r,$ $\left(\Omega,\Sigma,\mu\right)$ is a finite and positive measure space, $\mathcal{X}$ is separable Banach space. An application to  input-output system described by Urysohn type integral operator is discussed.

\vspace{5mm}

\textbf{Keywords.} Continuity, Hausdorff distance,  set valued map, input-output system, integrable output

\vspace{5mm}

\textbf{2010 Mathematics Subject Classification.} 26E25, 28C20, 46T20, 93C35

\section{Problem Statement}

To solve some problems arising in theory and applications, it is necessary to define the distance between the subsets of different metric spaces (see, e.g. \cite{kot}, \cite{sor} and references therein). For this aim often the Hausdorff-Gromov distance concept is used which is a generalization of the Hausdorff distance notion (see, e.g. \cite{aub}, \cite{bur}, \cite{fil}). In this paper for definition of the distance between the subsets of the spaces $L_p,$ $p> 1,$ the metric of the space $L_1$ is used. It turns out to be possible since $L_p \subset L_1$ for every $p\in (1,+\infty).$ Using the introduced metric the continuity of the closed balls of the spaces $L_p,$ $p> 1,$ with respect to $p$ is established.

Let $(\Omega, \Sigma, \mu)$ be a finite and positive measure space, $\left(\mathcal{X}, \left\|\cdot \right\|\right)$ be a separable Banach space. \linebreak $L_p\left(\Omega, \Sigma, \mu; \mathcal{X}\right)$ stands for the space of all (equivalence classes of) $\mu$-measurable functions $x(\cdot):\Omega \rightarrow \mathcal{X}$ such that  $\left\| x(\cdot) \right\|_p < +\infty$ where $\left\| x(\cdot) \right\|_p=\left(\int_{\Omega}\left\| x(s)\right\|^p \mu (ds) \right)^{\frac{1}{p}},$ integration is understood in the sense of Bochner.

For given $p\geq 1$ and $r>0$ we denote
\begin{eqnarray}\label{bep}
B_{\Omega, \mathcal{X},p}(r)=\left\{ x(\cdot)\in L_p \left(\Omega, \Sigma,\mu; \mathcal{X}\right): \left\| x(\cdot) \right\|_p \leq r \right\}.
\end{eqnarray}

Let $Y\subset L_{p_1} \left(\Omega, \Sigma,\mu; \mathcal{X}\right)$ and $W\subset L_{p_2} \left(\Omega, \Sigma,\mu; \mathcal{X}\right)$ are bounded sets where $p_1 \geq 1$, $p_2 \geq 1.$ The Hausdorff distance between the sets $Y$ and $W$ is denoted by symbol $\mathcal{H}_1(Y,W)$ and defined as
\begin{eqnarray}\label{haus}
\mathcal{H}_1(Y,W)= \max \left\{ \sup_{y(\cdot)\in Y} d_1\left(y(\cdot),W\right), \sup_{w(\cdot)\in W} d_1\left(w(\cdot),Y\right) \right\}
\end{eqnarray} where $d_1\left(y(\cdot),W\right)=\inf \left\{ \left\| y(\cdot)-w(\cdot) \right\|_1 :w(\cdot)\in W\right\}.$

The continuity of the set valued map $p \rightarrow B_{\Omega, \mathcal{X},p}(r):(1,+\infty)\rightarrow L_p \left(\Omega, \Sigma,\mu; \mathcal{X}\right)$ with respect to the pseudometric $\mathcal{H}_1(\cdot,\cdot)$ is studied. The paper is organized as follows:
In Section 2 an auxiliary proposition is proved which is used in following arguments (Proposition \ref{prop2.1}). In Section 3 the lower semicontinuity, in Section 4 the upper semicontinuty of the set valued $p\rightarrow B_{\Omega, \mathcal{X},p}(r)$, $p\in (1,+\infty)$ (Theorem \ref{teo3.1} and Theorem \ref{teo4.1} respectively) is shown. In Section 5 the main result of the paper, the continuity of the considered set valued map is formulated (Theorem \ref{teo5.1}). In Section 6 continuity of the set of outputs of the input-output system described by Urysohn type integral operator is discussed (Theorem \ref{teo6.1}).

\section{Preliminaries}

Let $\alpha >0$ be a given number. We set
\begin{eqnarray}\label{bepal}
B_{\Omega,\mathcal{X}, p}^{\alpha}(r)=\left\{ x(\cdot)\in B_{\Omega,\mathcal{X},p}(r): \left\| x(s) \right\| \leq \alpha \ \mbox{for every} \ s\in \Omega \right\}.
\end{eqnarray}

For given $p_*>1$ and $\varepsilon >0$ we denote
\begin{eqnarray}\label{ce*}
c_*=\max \left\{\max \left\{ r^{\frac{p-p_*}{p}}: p\in \left[p_*,2p_*\right]\right\},\max \left\{ r^{\frac{p_*-p}{p_*}}: p\in \left[\frac{p_*+1}{2},p_*\right]\right\}\right\}
\end{eqnarray}
\begin{eqnarray}\label{de*}
d_*= \left( \mu(\Omega)+1\right)\left(c_*+1\right) ,
\end{eqnarray}
\begin{eqnarray}\label{alep}
\alpha_* (\varepsilon)= \max \left\{2r, \ r \cdot \left(\frac{8r}{\varepsilon}\right)^{\frac{2}{p_*-1}}\right\} \, ,
\end{eqnarray}
\begin{eqnarray}\label{sigma*}
\sigma_* = \min \left\{ \frac{r}{2},4d_*, 4r\mu(\Omega), 4d_*\sqrt{r}\right\}.
\end{eqnarray}

\begin{proposition}\label{prop2.1} Let $p_*>1$ and $\varepsilon \in (0,\sigma_*)$ be fixed numbers. Then for every $\alpha \geq \alpha_* (\varepsilon)$ the inequality
\begin{eqnarray*}
\mathcal{H}_1\left(B_{\Omega,\mathcal{X},p}(r), B_{\Omega,\mathcal{X},p}^{\alpha}(r)\right) \leq \frac{\varepsilon}{4}
\end{eqnarray*}is satisfied for every $p\in \left[\frac{p_*+1}{2}, 2p_*\right]$ where $\alpha_* (\varepsilon)$ is defined by (\ref{alep}), $B_{\Omega,\mathcal{X},p}(r)$ and $B_{\Omega,\mathcal{X},p}^{\alpha}(r)$ are defined by (\ref{bep}) and (\ref{bepal}) respectively, $\mathcal{H}_1(\cdot,\cdot)$ is defined by (\ref{haus}).
\end{proposition}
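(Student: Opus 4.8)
The plan is to estimate the one‑sided Hausdorff distances in (\ref{haus}) separately. Since $B_{\Omega,\mathcal{X},p}^{\alpha}(r)\subset B_{\Omega,\mathcal{X},p}(r)$, one of the two suprema in (\ref{haus}) vanishes, so it suffices to bound $\sup\{d_1(x(\cdot),B_{\Omega,\mathcal{X},p}^{\alpha}(r)): x(\cdot)\in B_{\Omega,\mathcal{X},p}(r)\}$. The natural candidate approximant of a given $x(\cdot)\in B_{\Omega,\mathcal{X},p}(r)$ is its truncation $x_\alpha(\cdot)$ defined by $x_\alpha(s)=x(s)$ when $\|x(s)\|\le\alpha$ and $x_\alpha(s)=\alpha\, x(s)/\|x(s)\|$ otherwise; this is $\mu$‑measurable, satisfies $\|x_\alpha(s)\|\le\alpha$ pointwise, and $\|x_\alpha(s)\|\le\|x(s)\|$ everywhere, hence $\|x_\alpha(\cdot)\|_p\le\|x(\cdot)\|_p\le r$, so $x_\alpha(\cdot)\in B_{\Omega,\mathcal{X},p}^{\alpha}(r)$ — thus it is an admissible competitor.

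The core estimate is then on $\|x(\cdot)-x_\alpha(\cdot)\|_1$. On the set $A_\alpha=\{s\in\Omega:\|x(s)\|>\alpha\}$ we have $\|x(s)-x_\alpha(s)\|=\|x(s)\|-\alpha\le\|x(s)\|$, and off $A_\alpha$ the difference is $0$; so $\|x(\cdot)-x_\alpha(\cdot)\|_1\le\int_{A_\alpha}\|x(s)\|\,\mu(ds)$. First I would bound $\mu(A_\alpha)$ via Chebyshev: $\alpha^p\,\mu(A_\alpha)\le\int_{A_\alpha}\|x(s)\|^p\,\mu(ds)\le r^p$, so $\mu(A_\alpha)\le (r/\alpha)^p$. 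Next, on $A_\alpha$ write $\|x(s)\|=\|x(s)\|^{p}\cdot\|x(s)\|^{1-p}\le \|x(s)\|^p\,\alpha^{1-p}$ (using $p>1$ and $\|x(s)\|>\alpha$), which gives $\int_{A_\alpha}\|x(s)\|\,\mu(ds)\le \alpha^{1-p}\int_\Omega\|x(s)\|^p\,\mu(ds)\le \alpha^{1-p}r^{p}$. Since $p\in[\tfrac{p_*+1}{2},2p_*]$ we have $p-1\ge\tfrac{p_*-1}{2}>0$, so $\alpha^{1-p}r^p=r\,(r/\alpha)^{p-1}\le r\,(r/\alpha)^{(p_*-1)/2}$ whenever $\alpha\ge r$; and from the definition (\ref{alep}) of $\alpha_*(\varepsilon)$ one checks $\alpha\ge\alpha_*(\varepsilon)$ forces $(r/\alpha)^{(p_*-1)/2}\le\varepsilon/(8r)$, hence $\|x(\cdot)-x_\alpha(\cdot)\|_1\le\varepsilon/8\le\varepsilon/4$. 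Taking the supremum over $x(\cdot)\in B_{\Omega,\mathcal{X},p}(r)$ and then noting this bound is uniform in $p\in[\tfrac{p_*+1}{2},2p_*]$ and in $\alpha\ge\alpha_*(\varepsilon)$ completes the argument. (The extra slack between $\varepsilon/8$ and $\varepsilon/4$, and the various terms inside $\sigma_*$ in (\ref{sigma*}), are there to absorb the case‑splitting in $\alpha_*(\varepsilon)$ — in particular the two regimes $\alpha=2r$ versus $\alpha=r(8r/\varepsilon)^{2/(p_*-1)}$ — and I would simply verify both regimes give the same $\varepsilon/8$ bound.)

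The main obstacle is bookkeeping rather than conceptual: one must make the exponent estimate $(r/\alpha)^{p-1}\le\varepsilon/(8r)$ hold \emph{simultaneously} for all $p$ in the stated interval from a single threshold $\alpha_*(\varepsilon)$, which is why the worst exponent $p-1=\tfrac{p_*-1}{2}$ is used and why $\alpha_*(\varepsilon)$ carries the power $\tfrac{2}{p_*-1}$. A minor subtlety is the interplay between $r<1$ and $r\ge1$ (whether $r/\alpha<1$ and whether $(r/\alpha)^{p-1}$ is increasing or decreasing in the exponent), which is exactly what the two inner maxima defining $c_*$ in (\ref{ce*}) and the factors in $d_*$ and $\sigma_*$ are designed to handle; I would dispatch these by splitting on $r\lessgtr 1$ and using the monotonicity of $t\mapsto t^{p-1}$ on $(0,1]$ versus $[1,\infty)$. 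Measurability of $x_\alpha(\cdot)$ and the fact that $B_{\Omega,\mathcal{X},p}^\alpha(r)$ is genuinely a subset of $L_p$ (so that $d_1$ is finite) are routine and follow from separability of $\mathcal{X}$ and the measurability of $s\mapsto\|x(s)\|$.
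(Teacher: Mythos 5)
Your proof is correct and follows essentially the same route as the paper's: truncate $x(\cdot)$ at level $\alpha$, observe that the truncation lies in $B_{\Omega,\mathcal{X},p}^{\alpha}(r)$, bound the $L_1$ distance on the exceptional set $\{s:\|x(s)\|>\alpha\}$ by a quantity of order $r^p/\alpha^{p-1}$, and make the exponent estimate uniform over $p\in\left[\frac{p_*+1}{2},2p_*\right]$ via the worst case $p-1=\frac{p_*-1}{2}$. The only cosmetic difference is that your pointwise inequality $\|x(s)\|\le\|x(s)\|^{p}\alpha^{1-p}$ replaces the paper's H\"older-plus-Chebyshev step (which yields $2r^p/\alpha^{p-1}$ rather than your sharper $r^p/\alpha^{p-1}$), so your version even renders the Chebyshev bound on $\mu(A_\alpha)$ superfluous.
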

\begin{proof}
Let $p\in \left[\frac{p_*+1}{2}, 2p_*\right],$ $\alpha >0$ be arbitrarily fixed numbers. Choose an arbitrary $x(\cdot)\in B_{\Omega,\mathcal{X},p}(r)$ and define new function $x_*(\cdot):\Omega \rightarrow \mathcal{X}$ setting
\begin{eqnarray}\label{eq21}
x_*(s)=\left\{
\begin{array}{llll}
x(s) \, , & \mbox{if} &  \left\| x(s) \right\| \leq \alpha \, ,  \\
\displaystyle \frac{x(s)}{\left\|x(s)\right\|}\alpha \, , & \mbox{if} & \left\|x(s)\right\| > \alpha \, .
\end{array}
\right.
\end{eqnarray} It is not difficult to verify that $x_*(\cdot)\in B_{\Omega,p}^{\alpha}(r).$ Denote $E_*=\left\{s\in \Omega: \left\| x(s) \right\| > \alpha  \right\}.$ From the inequality
\begin{eqnarray*}
\alpha^p\mu(E_*) \leq  \int_{E_*} \left\|x(s)\right\|^p \mu(ds)  \leq \int_{\Omega} \left\|x(s)\right\|^p \mu(ds) \leq r^p
\end{eqnarray*}
we have
\begin{eqnarray}\label{eq22}
\mu(E_*) \leq \frac{r^p}{\alpha^p}
\end{eqnarray}
which  is the Chebyshev's inequality for function $x(\cdot)\in B_{\Omega,\mathcal{X},p}(r)$ (see, \cite{whe}, p.82). (\ref{eq21}) and (\ref{eq22}) yield
\begin{eqnarray*}
\left\|x(\cdot)-x_*(\cdot)\right\|_1  \leq  \left[\mu(E_*)\right]^{\frac{p-1}{p}} \left(\int_{E_*} \left\|x(s)-x_*(s)\right\|^p \mu(ds)\right)^{\frac{1}{p}} \leq \frac{2r^{p}}{\alpha^{p-1}}.
\end{eqnarray*} Thus we have
\begin{eqnarray*}
B_{\Omega,\mathcal{X},p}(r) \subset B_{\Omega,\mathcal{X},p}^{\alpha}(r) + \frac{2r^p}{\alpha^{p-1}}B_{\Omega,\mathcal{X},1}(1).
\end{eqnarray*} Since $B_{\Omega,\mathcal{X},p}^{\alpha}(r)\subset B_{\Omega,\mathcal{X},p}(r)$, then the last inclusion yields
\begin{eqnarray}\label{eq23}
\mathcal{H}_1 \left(B_{\Omega,\mathcal{X},p}(r), B_{\Omega,\mathcal{X},p}^{\alpha}(r)\right) \leq \frac{2r^p}{\alpha^{p-1}}.
\end{eqnarray}
where $p\in \left[\frac{p_*+1}{2}, 2p_*\right].$

From inequalities $p\geq \frac{p_*+1}{2}$ and $p_* >1$ it follows that $\displaystyle \frac{2(p-1)}{p_*-1} \geq 1.$ Since $\displaystyle 0<\varepsilon <\sigma_* \leq \frac{r}{2},$ $\alpha \geq \alpha_*(\varepsilon) \geq 2r,$ $\alpha_*(\varepsilon) \geq r \cdot \left(\frac{8r}{\varepsilon}\right)^{\frac{2}{p_*-1}},$ then by virtue of (\ref{eq23}) we have that for every $\alpha \geq \alpha_*(\varepsilon)$ and $p\in \left[\frac{p_*+1}{2}, 2p_*\right]$ the inequality
\begin{eqnarray*}
\mathcal{H}_1 \left(B_{\Omega,\mathcal{X},p}(r), B_{\Omega,\mathcal{X},p}^{\alpha}(r)\right) &\leq &  2r\cdot \left(\frac{r}{\alpha_*(\varepsilon)}\right)^{p-1} \leq
2r \left(\frac{r}{r\cdot \left(\frac{8r}{\varepsilon}\right)^{\frac{2}{p_*-1}}}\right)^{p-1} \\ & = & 2r\cdot \left( \frac{\varepsilon}{8r}\right)^{\frac{2(p-1)}{p_*-1}} \leq 2r \cdot \frac{\varepsilon}{8r} =\frac{\varepsilon}{4}
\end{eqnarray*}
is held.
\end{proof}

\section{Lower Semicontinuty}

In this section it will be proved that the set valued map $p\rightarrow B_{\Omega,\mathcal{X},p}(r)$, $p>1,$ is lower semicontinuous.

\begin{proposition}\label{prop3.1}
Assume $p_*>1,$ $\varepsilon \in \left(0, \sigma_*\right).$ Then there exists $\delta_1(\varepsilon) \in\left(0, \frac{p_*-1}{2}\right]$ such that for every $p\in \left(p_*-\delta_1(\varepsilon), p_*\right)$ the inclusion
\begin{eqnarray*}
B_{\Omega,\mathcal{X},p_*}(r) \subset B_{\Omega,\mathcal{X},p}(r) +\varepsilon B_{\Omega,\mathcal{X},1}(1)
\end{eqnarray*} is satisfied where $\sigma_* >0$ is defined by (\ref{sigma*}).
\end{proposition}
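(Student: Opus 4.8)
The plan is to combine the continuous embeddings $L_{p_*}(\Omega,\Sigma,\mu;\mathcal{X}) \hookrightarrow L_p(\Omega,\Sigma,\mu;\mathcal{X}) \hookrightarrow L_1(\Omega,\Sigma,\mu;\mathcal{X})$, which hold for $1\le p\le p_*$ because $\mu$ is finite, with a single \emph{deterministic} rescaling that pulls any element of $B_{\Omega,\mathcal{X},p_*}(r)$ back into $B_{\Omega,\mathcal{X},p}(r)$ at a small cost measured in the $L_1$--metric. First I would fix $x(\cdot)\in B_{\Omega,\mathcal{X},p_*}(r)$ and anticipate that $\delta_1(\varepsilon)$ will be taken in $\left(0,\frac{p_*-1}{2}\right]$, so that every $p\in\left(p_*-\delta_1(\varepsilon),p_*\right)$ automatically satisfies $1<\frac{p_*+1}{2}<p<p_*$. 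Applying Hölder's inequality to $\left\|x(s)\right\|^p$ and the constant $1$ with conjugate exponents $\frac{p_*}{p}$ and $\frac{p_*}{p_*-p}$ gives $\left\|x(\cdot)\right\|_p\le \mu(\Omega)^{\frac1p-\frac1{p_*}}\left\|x(\cdot)\right\|_{p_*}\le r\,\mu(\Omega)^{\frac1p-\frac1{p_*}}$, and the same inequality with exponents $p_*$ and $\frac{p_*}{p_*-1}$ yields the a priori bound $\left\|x(\cdot)\right\|_1\le r\,\mu(\Omega)^{1-\frac1{p_*}}=:M$, a constant depending only on $r,p_*,\mu(\Omega)$.

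Next I would introduce the $x$--independent factor $c(p)=\max\left\{1,\mu(\Omega)^{\frac1p-\frac1{p_*}}\right\}\ge 1$, which tends to $1$ as $p\uparrow p_*$. Then $\left\|\frac{1}{c(p)}x(\cdot)\right\|_p=\frac{1}{c(p)}\left\|x(\cdot)\right\|_p\le \frac{1}{c(p)}\,r\,\mu(\Omega)^{\frac1p-\frac1{p_*}}\le r$, so $\frac{1}{c(p)}x(\cdot)\in B_{\Omega,\mathcal{X},p}(r)$, while
\[
\left\|x(\cdot)-\frac{1}{c(p)}x(\cdot)\right\|_1=\left(1-\frac{1}{c(p)}\right)\left\|x(\cdot)\right\|_1\le\left(1-\frac{1}{c(p)}\right)M .
\]
Writing $x(\cdot)=\frac{1}{c(p)}x(\cdot)+\left(x(\cdot)-\frac{1}{c(p)}x(\cdot)\right)$ therefore shows $x(\cdot)\in B_{\Omega,\mathcal{X},p}(r)+\left(1-\frac{1}{c(p)}\right)M\,B_{\Omega,\mathcal{X},1}(1)$ uniformly over $x(\cdot)\in B_{\Omega,\mathcal{X},p_*}(r)$, and the whole problem is reduced to forcing $\left(1-\frac{1}{c(p)}\right)M\le\varepsilon$ by shrinking $p_*-p$.

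For the choice of $\delta_1(\varepsilon)$ I would split cases. If $\mu(\Omega)\le 1$ then $c(p)\equiv 1$, the inclusion holds even with $\varepsilon$ replaced by $0$, and one may take $\delta_1(\varepsilon)=\frac{p_*-1}{2}$. If $\mu(\Omega)>1$, I would use the elementary estimate $1-e^{-t}\le t$ with $t=\left(\frac1p-\frac1{p_*}\right)\ln\mu(\Omega)\ge 0$ to get $1-\frac{1}{c(p)}=1-\mu(\Omega)^{-\left(\frac1p-\frac1{p_*}\right)}\le\left(\frac1p-\frac1{p_*}\right)\ln\mu(\Omega)$, together with $\frac1p-\frac1{p_*}=\frac{p_*-p}{p\,p_*}<\frac{p_*-p}{p_*}$ (valid since $p>1$). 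Hence it suffices to set
\[
\delta_1(\varepsilon)=\min\left\{\frac{p_*-1}{2},\ \frac{\varepsilon\,p_*}{r\,\mu(\Omega)^{1-1/p_*}\ln\mu(\Omega)}\right\}\in\left(0,\tfrac{p_*-1}{2}\right],
\]
because then $p\in\left(p_*-\delta_1(\varepsilon),p_*\right)$ gives $\left(1-\frac{1}{c(p)}\right)M\le\frac{p_*-p}{p_*}\,M\ln\mu(\Omega)<\delta_1(\varepsilon)\,\frac{M\ln\mu(\Omega)}{p_*}\le\varepsilon$, so $x(\cdot)\in B_{\Omega,\mathcal{X},p}(r)+\varepsilon B_{\Omega,\mathcal{X},1}(1)$; since $x(\cdot)\in B_{\Omega,\mathcal{X},p_*}(r)$ was arbitrary, the asserted inclusion follows.

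The main point requiring care is \emph{uniformity in $x(\cdot)$}: the rescaling factor must not depend on the chosen element, which is exactly why $c(p)$ is extracted from the Hölder embedding constant $\mu(\Omega)^{1/p-1/p_*}$ rather than from $\left\|x(\cdot)\right\|_p$, and why the uniform $L_1$--bound $M$ on $B_{\Omega,\mathcal{X},p_*}(r)$ is needed; one must also keep $p>1$ so that the conjugate exponents and the estimate on $\frac1p-\frac1{p_*}$ are legitimate, which the constraint $\delta_1(\varepsilon)\le\frac{p_*-1}{2}$ secures. An alternative, fully parallel route is to first replace $x(\cdot)$ by its truncation at level $\alpha_*(\varepsilon)$ (defined by (\ref{alep})) via Proposition \ref{prop2.1}, losing $\frac{\varepsilon}{4}$ in the $L_1$--metric, and then rescale the resulting bounded function exactly as above; this passes through the quantities $\alpha_*(\varepsilon)$ and $\sigma_*$ already prepared in Section 2, at the price of slightly heavier constants, but the structure of the argument is identical.
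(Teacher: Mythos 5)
Your proof is correct, and it takes a genuinely different and substantially simpler route than the paper's. The paper first truncates $x(\cdot)$ at a level $\alpha$ via Proposition \ref{prop2.1} (losing $\varepsilon/4$), then applies the pointwise nonlinear renormalization $x_p(s)=x_0(s)\left\|x_0(s)\right\|^{\frac{p_*-p}{p}}r^{\frac{p-p_*}{p}}$, and controls $\left\|x_0(\cdot)-x_p(\cdot)\right\|_1$ by splitting $\Omega$ into the sets $\Omega_*(\varepsilon)$, $\Omega^*(\varepsilon)$ of (\ref{eq312}) and invoking the logarithmic thresholds $\sigma_1,\sigma_2,\sigma_3$; you instead exploit that for $p<p_*$ the embedding $L_{p_*}\hookrightarrow L_p$ has norm $\mu(\Omega)^{\frac1p-\frac1{p_*}}\rightarrow 1$ as $p\uparrow p_*$, so a single $x$-independent linear rescaling by $1/c(p)$ lands in $B_{\Omega,\mathcal{X},p}(r)$ at an $L_1$-cost $\left(1-\frac{1}{c(p)}\right)M$ that is uniform over the ball; all your estimates (the two Hölder applications, the bound $1-e^{-t}\le t$, the case split on $\mu(\Omega)\le 1$, and the resulting $\delta_1(\varepsilon)\in\left(0,\frac{p_*-1}{2}\right]$) check out, and you correctly keep the rescaling factor independent of $x(\cdot)$. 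Your argument is shorter, needs neither the truncation nor the restriction $\varepsilon<\sigma_*$ (it works for every $\varepsilon>0$), and would equally dispose of Proposition \ref{prop4.2}, since there too the source ball sits inside the target space with embedding constant tending to $1$. What the paper's heavier machinery buys is uniformity across all four semicontinuity statements: in Propositions \ref{prop3.2} and \ref{prop4.1} the function to be approximated need not belong to the target $L_p$-space at all, so no linear rescaling can work and the truncation-plus-power-map construction becomes essential; the authors simply reuse the same template in the case you treat, where it is overkill.
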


\begin{proof}
At first let us prove that for $\varepsilon \in \left(0, \sigma_*\right)$ and $\alpha_2 >\alpha_1 >\alpha_*(\varepsilon)$ there exists $\gamma_1(\varepsilon,\alpha_1,\alpha_2) \in\left(0, \frac{p_*-1}{2}\right]$ such that for every $p\in \left(p_*-\gamma_1, p_*\right)$ the inclusion
\begin{eqnarray}\label{eq32a}
B_{\Omega,\mathcal{X},p_*}^{\alpha_1}(r) \subset B_{\Omega,\mathcal{X},p}^{\alpha_2}(r) +\frac{\varepsilon}{2} B_{\Omega,\mathcal{X},1}(1)
\end{eqnarray} is satisfied where $\alpha_*(\varepsilon)>0$ is defined by (\ref{alep}).

Denote
\begin{eqnarray}\label{eq32}
\gamma_1(\varepsilon,\alpha_1,\alpha_2)=\min \left\{ \sigma_1 (\varepsilon, \alpha_1), \sigma_2 (\varepsilon, \alpha_1), \sigma_3 (\alpha_1, \alpha_2), \frac{p_*-1}{2} \right\}
\end{eqnarray}
where
\begin{eqnarray}\label{eq34}
\sigma_1(\varepsilon,\alpha_1)=p_* \left[1-  \frac{1}{1+\log_{\displaystyle \frac{\varepsilon}{4r \mu(\Omega)}} \left(1- \displaystyle \frac{\varepsilon}{4\alpha_1 \mu(\Omega)}\right)} \right],
\end{eqnarray}
\begin{eqnarray}\label{eq35}
\sigma_2(\varepsilon,\alpha_1)=p_* \left[1-  \frac{1}{1+\log_{\displaystyle \frac{\alpha_1}{r}}\left( 1+ \displaystyle \frac{\varepsilon}{4\alpha_1\mu(\Omega)}\right)} \right],
\end{eqnarray}
\begin{eqnarray}\label{eq35a}
\sigma_3(\alpha_1,\alpha_2)=p_* \left[1-  \frac{1}{1+\log_{\displaystyle \frac{\alpha_1}{r}} \displaystyle \frac{\alpha_2}{\alpha_1}} \right].
\end{eqnarray}

Taking into consideration that $p_*>1,$ $\alpha_2 >\alpha_1 >\alpha_*(\varepsilon)\geq 2r > r,$ $\varepsilon <4r\mu(\Omega)<4\alpha_1 \mu(\Omega)$, we obtain that $\sigma_1(\varepsilon, \alpha_1)>0,$ $\sigma_2(\varepsilon, \alpha_1)>0,$ $\sigma_3(\alpha_1, \alpha_2)>0$ and hence $\gamma_1=\gamma_1(\varepsilon,\alpha_1,\alpha_2) \in \left( 0,\frac{p_*-1}{2} \right].$

Choose an arbitrary $p\in \left(p_*-\gamma_1, p_*\right)$ and $x_0(\cdot)\in B_{\Omega,\mathcal{X},p_*}^{\alpha_1}(r)$ and define new function $x_p(\cdot):\Omega \rightarrow \mathcal{X}$ setting
\begin{eqnarray}\label{eq36}
x_p(s)= x_0(s) \left\| x_0(s)\right\|^{\frac{p_*-p}{p}} r^{\frac{p-p_*}{p}}, \ \ s\in \Omega.
\end{eqnarray}

One can show that $\left\|x_p(\cdot)\right\|_{p}\leq r.$ From inclusion $p\in \left(p_*-\gamma_1, p_*\right)$, (\ref{eq32}) and (\ref{eq35a}) we have
\begin{eqnarray*}
p>p_*-\gamma_1 \geq p_* -\sigma_3(\alpha_1,\alpha_2) = \frac{p_*}{\log_{\frac{\alpha_1}{r}}\frac{\alpha_2}{r}}.
\end{eqnarray*}
Taking into consideration that  $\alpha_2 >\alpha_1 >\alpha_*(\varepsilon)\geq 2r$, from the last inequality we obtain  that
\begin{eqnarray}\label{eq39}
\left(\frac{\alpha_1}{r}\right)^{\frac{p_*}{p}}<\frac{\alpha_2}{r} \, .
\end{eqnarray}

Now inclusions $p\in \left(p_*-\gamma_1, p_*\right),$ $x_0(\cdot)\in B_{\Omega,\mathcal{X},p_*}^{\alpha_1}(r)$, (\ref{eq36}) and (\ref{eq39}) imply that
\begin{eqnarray*}
\left\|x_p(s)\right\|&=&  \left\| x_0(s)\right\|^{\frac{p_*}{p}} r^{1-\frac{p_*}{p}}  \leq
r \cdot \left(\frac{\alpha_{1}}{r}\right)^{\frac{p_*}{p}}< r\cdot \frac{\alpha_2}{r} = \alpha_2
\end{eqnarray*}
for every $s\in \Omega.$
Thus, we conclude that $x_p(\cdot)\in B_{\Omega,\mathcal{X},p}^{\alpha_2}(r).$

(\ref{eq36}) implies that
\begin{eqnarray}\label{eq311}
\left\|x_0(\cdot)-x_p(\cdot)\right\|_1 =\int_{\Omega} \left\|x_0(s)\right\|\cdot \left|1- \left(\frac{\left\|x_0(s)\right\|}{r}\right)^{\frac{p_* -p}{p}} \right| \mu(ds) .
\end{eqnarray}

Denote
\begin{eqnarray}\label{eq312}
\Omega_*(\varepsilon)=\left\{ s\in \Omega: 0\leq \left\| x_0(s) \right\| \leq  \frac{\varepsilon}{4\mu(\Omega)}\right\}, \  \ \Omega^*(\varepsilon)=\left\{ s \in \Omega: \frac{\varepsilon}{4\mu(\Omega)} < \left\| x_0(s) \right\| \leq  \alpha_1\right\}.
\end{eqnarray}

Choose an arbitrary $s\in \Omega_*(\varepsilon).$ Since $\varepsilon \in (0,\sigma_*),$ $\frac{p_*-p}{p}>0,$ then from (\ref{sigma*}) and (\ref{eq312}) it follows that
\begin{eqnarray*}
0\leq 1- \left(\frac{\left\|x_0(s)\right\|}{r}\right)^{\frac{p_*-p}{p}} \leq 1
\end{eqnarray*}
and consequently
\begin{eqnarray}\label{eq314}
\int_{\Omega_*(\varepsilon)} \left\|x_0(s)\right\|\cdot \left|1- \left(\frac{\left\|x_0(s)\right\|}{r}\right)^{\frac{p_* -p}{p}}\right| \mu(ds) \leq \frac{\varepsilon}{4\mu(\Omega)} \cdot \mu(\Omega_*(\varepsilon))\leq \frac{\varepsilon}{4}.
\end{eqnarray}

Now let $s\in \Omega^*(\varepsilon).$ Then (\ref{eq312}) implies that
\begin{eqnarray}\label{eq315}
1- \left(\frac{\alpha_1}{r}\right)^{\frac{p_*-p}{p}}\leq 1- \left(\frac{\left\|x_0(s)\right\|}{r}\right)^{\frac{p_*-p}{p}} \leq 1- \left(\frac{\varepsilon}{4\mu (\Omega)r}\right)^{\frac{p_*-p}{p}}.
\end{eqnarray}

Since $\frac{\varepsilon}{4r  \mu(\Omega)}<1$ and for chosen $p\in (p_*-\gamma_1,p_*)$ we have $p_*-p<\gamma_1 \leq \sigma_1(\varepsilon, \alpha_1),$ then from (\ref{eq34}) it follows that
\begin{eqnarray}\label{eq316}
1- \left(\frac{\varepsilon}{4\mu (\Omega)r}\right)^{\frac{p_*-p}{p}} < \frac{\varepsilon}{4\mu(\Omega)\alpha_1}.
\end{eqnarray}

Similarly, from inequalities $\alpha_1 >\alpha_*(\varepsilon)\geq 2r >r,$ $p_*-p<\gamma_1 \leq \sigma_2(\varepsilon, \alpha_1)$ and (\ref{eq35}) we obtain that
\begin{eqnarray}\label{eq316*}
-\frac{\varepsilon}{4\mu(\Omega)\alpha_1} <  1- \left(\frac{\alpha_1}{r}\right)^{\frac{p_*-p}{p}}
\end{eqnarray} is satisfied.

Taking into consideration (\ref{eq312}), (\ref{eq315}), (\ref{eq316}) and (\ref{eq316*}) we have that for $p\in (p_*-\gamma_1,p_*)$ and $s\in \Omega^*(\varepsilon)$ the inequality
\begin{eqnarray}\label{eq318}
\int_{\Omega^*(\varepsilon)} \left\|x_0(s)\right\|\cdot \left|1- \left(\frac{\left\|x_0(s)\right\|}{r}\right)^{\frac{p_* -p}{p}}\right| \mu(ds) \leq \alpha_1 \cdot \frac{\varepsilon}{4\alpha_1 \mu(\Omega)} \cdot \mu(\Omega_*(\varepsilon))\leq \frac{\varepsilon}{4}.
\end{eqnarray} is verified.

Now, (\ref{eq311}), (\ref{eq312}), (\ref{eq314}) and (\ref{eq318}) yield
\begin{eqnarray}\label{eq319}
\left\|x_0(\cdot)-x_p(\cdot)\right\|_1 \leq  \frac{\varepsilon}{4}+\frac{\varepsilon}{4} =\frac{\varepsilon}{2}.
\end{eqnarray}

Since $p\in \left(p_*-\gamma_1, p_*\right)$ and $x_0(\cdot)\in B_{\Omega,\mathcal{X},p_*}^{\alpha_1}(r)$ are arbitrarily chosen and $x_p(\cdot)\in B_{\Omega,\mathcal{X},p_*}^{\alpha_2}(r),$ the inequality (\ref{eq319}) gives the validity of the inclusion  (\ref{eq32a}).

Let $\alpha_1(\varepsilon)=2\alpha_*(\varepsilon),$ $\alpha_2(\varepsilon)=3\alpha_*(\varepsilon),$ $\delta_1(\varepsilon)=\gamma_1(\varepsilon,\alpha_1(\varepsilon),\alpha_2(\varepsilon))$ where $\gamma_1(\varepsilon,\alpha_1(\varepsilon),\alpha_2(\varepsilon))$ is defined by (\ref{eq32}) as $\alpha_1=\alpha_1(\varepsilon)$, $\alpha_2=\alpha_2(\varepsilon)$ and $\alpha_*(\varepsilon)>0$ is defined  by (\ref{alep}).
Then $\delta_1(\varepsilon)\in \left(0,\frac{p_*-1}{2}\right]$ and according to  (\ref{eq32a}), for every $p\in \left(p_*-\delta_1(\varepsilon), p_*\right)$ the inclusion
\begin{eqnarray}\label{eq320}
B_{\Omega,\mathcal{X},p_*}^{\alpha_1(\varepsilon)}(r) \subset B_{\Omega,\mathcal{X},p}^{\alpha_2(\varepsilon)}(r) +\frac{\varepsilon}{2} B_{\Omega,\mathcal{X},1}(1)
\end{eqnarray} is held.

By virtue of Proposition \ref{prop2.1} for $\alpha_1(\varepsilon)$ and $\alpha_2(\varepsilon)$ the inclusions
\begin{eqnarray}\label{eq321}
B_{\Omega,\mathcal{X},p}(r) \subset B_{\Omega,\mathcal{X},p}^{\alpha_1(\varepsilon)}(r) +\frac{\varepsilon}{4} B_{\Omega,\mathcal{X},1}(1),  \ \   B_{\Omega,\mathcal{X},p}^{\alpha_2(\varepsilon)}(r) \subset B_{\Omega,\mathcal{X},p}(r) +\frac{\varepsilon}{4} B_{\Omega,\mathcal{X},1}(1) \end{eqnarray} are satisfied for every $p\in \left[\frac{p_*+1}{2},2p_*\right].$

Since $\delta_1(\varepsilon) \leq \frac{p_*-1}{2},$ then  (\ref{eq320}) and (\ref{eq321}) imply that
\begin{eqnarray*}
B_{\Omega,\mathcal{X},p_*}(r) \subset B_{\Omega,\mathcal{X},p_*}^{\alpha_1(\varepsilon)}(r) +\frac{\varepsilon}{4} B_{\Omega,\mathcal{X},1}(1) \subset B_{\Omega,\mathcal{X},p}^{\alpha_2(\varepsilon)}(r) +\frac{3\varepsilon}{4} B_{\Omega,\mathcal{X},1}(1) \subset B_{\Omega,\mathcal{X},p}(r) +\varepsilon B_{\Omega,\mathcal{X},1}(1)
\end{eqnarray*} for every $p\in \left(p_*-\delta_1(\varepsilon), p_*\right).$
\end{proof}

\begin{proposition}\label{prop3.2}
Assume $p_*>1,$ $\varepsilon \in \left(0, \sigma_*\right).$ Then there exists $\delta_2(\varepsilon) \in\left(0, p_*\right]$ such that for every $p\in \left(p_*,p_*+\delta_2(\varepsilon)\right)$ the inclusion
\begin{eqnarray*}
B_{\Omega,\mathcal{X},p_*}(r) \subset B_{\Omega,\mathcal{X},p}(r) +\varepsilon B_{\Omega,\mathcal{X},1}(1)
\end{eqnarray*} is verified where $\sigma_* >0$ is defined by (\ref{sigma*}).
\end{proposition}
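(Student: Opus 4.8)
The plan is to repeat, with the obvious sign changes, the scheme of the proof of Proposition \ref{prop3.1}.

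First I would establish the truncated inclusion: for $\varepsilon\in(0,\sigma_*)$ and $\alpha_2>\alpha_1>\alpha_*(\varepsilon)$ there is a number $\gamma_2(\varepsilon,\alpha_1,\alpha_2)\in(0,p_*]$ such that
\begin{eqnarray*}
B_{\Omega,\mathcal{X},p_*}^{\alpha_1}(r)\subset B_{\Omega,\mathcal{X},p}^{\alpha_2}(r)+\frac{\varepsilon}{2}B_{\Omega,\mathcal{X},1}(1)\qquad\text{for every }p\in(p_*,p_*+\gamma_2).
\end{eqnarray*}
Given $x_0(\cdot)\in B_{\Omega,\mathcal{X},p_*}^{\alpha_1}(r)$ I would use the very same rescaling \eqref{eq36}, that is $x_p(s)=x_0(s)\|x_0(s)\|^{\frac{p_*-p}{p}}r^{\frac{p-p_*}{p}}$, now with the negative exponent $\frac{p_*-p}{p}$, so that $\|x_p(s)\|=\|x_0(s)\|^{\frac{p_*}{p}}r^{1-\frac{p_*}{p}}$. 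Then $\int_{\Omega}\|x_p(s)\|^{p}\mu(ds)=r^{p-p_*}\int_{\Omega}\|x_0(s)\|^{p_*}\mu(ds)\le r^{p-p_*}\cdot r^{p_*}=r^{p}$, hence $\|x_p(\cdot)\|_{p}\le r$. Moreover $\|x_p(s)\|=r\bigl(\|x_0(s)\|/r\bigr)^{\frac{p_*}{p}}\le r(\alpha_1/r)^{\frac{p_*}{p}}<r\cdot(\alpha_1/r)=\alpha_1<\alpha_2$, because $\frac{p_*}{p}<1$ and $\alpha_1/r>1$; in particular, in contrast with Proposition \ref{prop3.1}, the membership $x_p(\cdot)\in B_{\Omega,\mathcal{X},p}^{\alpha_2}(r)$ requires no smallness of $\gamma_2$ (no analogue of $\sigma_3$ from \eqref{eq35a} is needed, since raising a number larger than $1$ to a power less than $1$ decreases it).

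Next I would estimate, exactly as in \eqref{eq311},
\begin{eqnarray*}
\|x_0(\cdot)-x_p(\cdot)\|_1=\int_{\Omega}\|x_0(s)\|\cdot\Bigl|1-\bigl(\|x_0(s)\|/r\bigr)^{\frac{p_*-p}{p}}\Bigr|\,\mu(ds),
\end{eqnarray*}
splitting $\Omega$ into $\Omega_*(\varepsilon)$ and $\Omega^*(\varepsilon)$ as in \eqref{eq312}. On $\Omega^*(\varepsilon)$ the value $\|x_0(s)\|$ stays in the compact interval $\bigl[\varepsilon/(4\mu(\Omega)),\alpha_1\bigr]$, bounded away from $0$; the function $t\mapsto(t/r)^{\frac{p_*-p}{p}}$ is decreasing and its two endpoint values $\bigl(\varepsilon/(4r\mu(\Omega))\bigr)^{\frac{p_*-p}{p}}$ and $(\alpha_1/r)^{\frac{p_*-p}{p}}$ both tend to $1$ as $p\to p_*^{+}$, so two logarithmic thresholds on $\gamma_2$ — the analogues of $\sigma_1,\sigma_2$ in \eqref{eq34}--\eqref{eq35}, positive for the same reasons — give $\int_{\Omega^*(\varepsilon)}\le\varepsilon/4$ in the manner of \eqref{eq315}--\eqref{eq318}. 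The genuinely new point is $\Omega_*(\varepsilon)$: there $\|x_0(s)\|/r<1$ and the exponent is negative, so $\bigl|1-(\|x_0(s)\|/r)^{\frac{p_*-p}{p}}\bigr|$ is \emph{not} bounded by $1$ and the argument \eqref{eq314} does not carry over. Instead I would use the identity, valid on $\Omega_*(\varepsilon)$,
\begin{eqnarray*}
\|x_0(s)\|\cdot\Bigl|1-\bigl(\|x_0(s)\|/r\bigr)^{\frac{p_*-p}{p}}\Bigr|=r\Bigl[\bigl(\|x_0(s)\|/r\bigr)^{\frac{p_*}{p}}-\|x_0(s)\|/r\Bigr]\ge 0,
\end{eqnarray*}
together with the elementary fact that $u\mapsto u^{\frac{p_*}{p}}-u$ tends to $0$ uniformly on $[0,1]$ as $p\to p_*^{+}$; hence one further smallness condition on $\gamma_2$ forces $\int_{\Omega_*(\varepsilon)}\le r\cdot\sup_{u\in[0,1]}\bigl(u^{\frac{p_*}{p}}-u\bigr)\cdot\mu(\Omega)\le\varepsilon/4$. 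Adding the two contributions gives $\|x_0(\cdot)-x_p(\cdot)\|_1\le\varepsilon/2$ and the truncated inclusion follows.

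Finally, putting $\alpha_1(\varepsilon)=2\alpha_*(\varepsilon)$, $\alpha_2(\varepsilon)=3\alpha_*(\varepsilon)$ and $\delta_2(\varepsilon)=\min\{\gamma_2(\varepsilon,\alpha_1(\varepsilon),\alpha_2(\varepsilon)),p_*\}\in(0,p_*]$, every $p\in(p_*,p_*+\delta_2(\varepsilon))$ lies in $\bigl[\frac{p_*+1}{2},2p_*\bigr]$, so Proposition \ref{prop2.1} applies with $\alpha=\alpha_1(\varepsilon)$ and $\alpha=\alpha_2(\varepsilon)$, and, combining it with the truncated inclusion exactly as at the end of the proof of Proposition \ref{prop3.1},
\begin{eqnarray*}
B_{\Omega,\mathcal{X},p_*}(r)\subset B_{\Omega,\mathcal{X},p_*}^{\alpha_1(\varepsilon)}(r)+\frac{\varepsilon}{4}B_{\Omega,\mathcal{X},1}(1)\subset B_{\Omega,\mathcal{X},p}^{\alpha_2(\varepsilon)}(r)+\frac{3\varepsilon}{4}B_{\Omega,\mathcal{X},1}(1)\subset B_{\Omega,\mathcal{X},p}(r)+\varepsilon B_{\Omega,\mathcal{X},1}(1)
\end{eqnarray*}
for every $p\in(p_*,p_*+\delta_2(\varepsilon))$. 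I expect the only real obstacle to be the $\Omega_*(\varepsilon)$ estimate: one must resist bounding the factor $\bigl|1-(\|x_0(s)\|/r)^{\frac{p_*-p}{p}}\bigr|$ by a constant and instead track the exact cancellation with the prefactor $\|x_0(s)\|$ displayed above; once this is in place, the remaining thresholds defining $\gamma_2$ are the same routine logarithmic inversions as in \eqref{eq34}--\eqref{eq35a}, and everything else is a sign-adjusted copy of Proposition \ref{prop3.1}.
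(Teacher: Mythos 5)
Your argument is correct and follows the same overall scheme as the paper's proof of Proposition \ref{prop3.2}: truncate via Proposition \ref{prop2.1}, rescale by $x_p(s)=x_0(s)\left\|x_0(s)\right\|^{\frac{p_*-p}{p}}r^{\frac{p-p_*}{p}}$, split $\Omega$ into a small-norm and a large-norm part, and bound each piece by $\varepsilon/4$. (The paper works with a single $\alpha$ rather than a pair $\alpha_1<\alpha_2$, for exactly the reason you note: with $p>p_*$ the rescaled function automatically satisfies $\left\|x_p(s)\right\|<\alpha$, so no analogue of $\sigma_3$ is needed; your retention of two radii is harmless.) Where you genuinely diverge is on the small-norm set, which you correctly identify as the one step that does not transfer from Proposition \ref{prop3.1}. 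The paper changes the splitting threshold from $\frac{\varepsilon}{4\mu(\Omega)}$ to $\left(\frac{\varepsilon}{8d_*}\right)^2$ (see (\ref{eq330})--(\ref{eq331})) and on that set abandons the cancellation entirely, bounding $\left\|y_0(s)-y_p(s)\right\|$ by the triangle inequality as $\left\|y_0(s)\right\|+\left\|y_0(s)\right\|^{\frac{p_*}{p}}r^{\frac{p-p_*}{p}}$ and using only $\frac{p_*}{p}>\frac12$ together with the constants $c_*,d_*$ from (\ref{ce*})--(\ref{de*}); this costs no additional smallness of $p-p_*$ but is the reason those constants appear in the paper at all. You instead keep the threshold $\frac{\varepsilon}{4\mu(\Omega)}$ and exploit the exact identity $\left\|x_0(s)\right\|\bigl|1-(\left\|x_0(s)\right\|/r)^{\frac{p_*-p}{p}}\bigr|=r\bigl[(\left\|x_0(s)\right\|/r)^{\frac{p_*}{p}}-\left\|x_0(s)\right\|/r\bigr]$ together with $\sup_{u\in[0,1]}\left(u^{q}-u\right)\leq\frac{1-q}{q}\rightarrow 0$ as $q=\frac{p_*}{p}\rightarrow 1^-$, at the price of one extra (explicitly computable) smallness condition on $\gamma_2$. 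Both routes are valid; yours is arguably cleaner in that it avoids introducing $c_*$ and $d_*$ and makes the cancellation that fails in (\ref{eq314}) visible, while the paper's keeps the admissible interval on that piece independent of $\varepsilon$. Your verification of $\left\|x_p(\cdot)\right\|_p\leq r$, the membership $x_p(\cdot)\in B_{\Omega,\mathcal{X},p}^{\alpha_2}(r)$, the two logarithmic thresholds on $\Omega^*(\varepsilon)$, and the final sandwich via Proposition \ref{prop2.1} all check out.
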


\begin{proof}
At first step it will be proved that for $\varepsilon \in \left(0, \sigma_*\right)$ and $\alpha >\alpha_*(\varepsilon)$  there exists $\gamma_2=\gamma_2(\varepsilon,\alpha) \in\left(0, p_*\right]$ such that for every $p\in \left(p_*, p_*+\gamma_2\right)$ the inclusion
\begin{eqnarray} \label{eq322a}
B_{\Omega,\mathcal{X},p_*}^{\alpha}(r) \subset B_{\Omega,\mathcal{X},p}^{\alpha}(r) +\frac{\varepsilon}{2} B_{\Omega,\mathcal{X},1}(1)
\end{eqnarray} is satisfied where $\alpha_*(\varepsilon)>0$ is defined by (\ref{alep}).

We set
\begin{eqnarray}\label{eq323}
\gamma_2(\varepsilon,\alpha)=\min \left\{ \sigma_4 (\varepsilon, \alpha), \sigma_5 (\varepsilon, \alpha), p_* \right\}
\end{eqnarray}
where
\begin{eqnarray}\label{eq324}
\sigma_4(\varepsilon,\alpha)=p_* \left[\frac{1}{1-\log_{\displaystyle \frac{r}{\alpha}} \displaystyle \left(1-\frac{\varepsilon}{4\alpha \mu(\Omega)}\right)} -1 \right],
\end{eqnarray}
\begin{eqnarray}\label{eq325}
\sigma_5(\varepsilon,\alpha)=p_* \left[\frac{1}{1-\log_{\displaystyle \frac{64rd_*^2}{ \varepsilon^2}} \displaystyle \left(1+ \frac{\varepsilon}{4\alpha\mu(\Omega)}\right)}-1 \right],
\end{eqnarray} where $d_*$ is defined by (\ref{de*}).

Taking into consideration (\ref{alep}), (\ref{sigma*}), the inequalities $\alpha >\alpha_*(\varepsilon)\geq 2r>r,$ $\varepsilon <\sigma_*$ one can verify that
\begin{eqnarray*}
0<\log_{\displaystyle \frac{r}{\alpha}}\left(1-\frac{\varepsilon}{4\alpha \mu(\Omega)}\right) <1,
\end{eqnarray*}
\begin{eqnarray*}
0<\log_{\displaystyle \frac{64rd_*^2}{\varepsilon^2}} \left(\displaystyle 1+\frac{\varepsilon}{4\alpha \mu(\Omega)}\right) <1
\end{eqnarray*} which imply that $\sigma_4(\varepsilon, \alpha)>0,$ $\sigma_5(\varepsilon, \alpha)>0.$ Finally, according to (\ref{eq323}) we obtain that  $\gamma_2=\gamma_2(\varepsilon, \alpha)\in \left(0,p_*\right].$

Choose an arbitrary $p\in \left(p_*, p_*+\gamma_2\right)$ and $y_0(\cdot)\in B_{\Omega,\mathcal{X},p_*}^{\alpha}(r)$ and define new function $y_p(\cdot):\Omega \rightarrow \mathcal{X}$ setting
\begin{eqnarray}\label{eq326}
y_p(s)= y_0(s) \left\| y_0(s)\right\|^{\frac{p_*-p}{p}} r^{\frac{p-p_*}{p}}, \ \ s\in \Omega.
\end{eqnarray}

From (\ref{eq326}) and inclusion $y_0(\cdot)\in B_{\Omega,\mathcal{X},p_*}^{\alpha}(r)$  it follows that $\left\|y_p(\cdot)\right\|_{p}\leq r.$
Since $p>p_*,$ $\alpha >2r$ and $y_0(\cdot)\in B_{\Omega,\mathcal{X},p_*}^{\alpha}(r)$, then (\ref{eq326}) implies that
\begin{eqnarray*}
\left\| y_p(s)\right\|= r \left(\frac{\left\|y_0(s)\right\|}{r}\right)^{\frac{p_*}{p}} \leq r \left(\frac{\alpha}{r}\right)^{\frac{p_*}{p}}<r \frac{\alpha}{r}=\alpha
\end{eqnarray*} for every $s\in \Omega$ which yields that  $y_p(\cdot)\in B_{\Omega,\mathcal{X},p}^{\alpha}(r).$

Denote
\begin{eqnarray}\label{eq330}
A_*(\varepsilon)=\left\{ s\in \Omega: 0\leq \left\| y_0(s) \right\| \leq  \left(\frac{\varepsilon}{8d_{*}}\right)^2\right\},
\end{eqnarray}
\begin{eqnarray}\label{eq331}
 A^*(\varepsilon)=\left\{ s \in \Omega: \left(\frac{\varepsilon}{8d_{*}}\right)^{2} < \left\| y_0(s) \right\| \leq  \alpha\right\}
\end{eqnarray} where $d_*$ is defined by (\ref{de*}).

It is obvious that
\begin{eqnarray}\label{eq332}
\left\|y_0(\cdot)-y_p(\cdot)\right\|_1 &=&\int_{A_*(\varepsilon)} \left\|y_0(s)- y_0(s)\left\|y_0(s)\right\|^{\frac{p_* -p}{p}} r^{\frac{p - p_*}{p}}\right\| \mu(ds)  \nonumber \\
&+& \int_{A^*(\varepsilon)} \left\|y_0(s)- y_0(s)\left\|y_0(s)\right\|^{\frac{p_* -p}{p}} r^{\frac{p - p_*}{p}}\right\| \mu(ds).
\end{eqnarray}

Since $\varepsilon \in (0,\sigma_*),$  $p\in (p_*,p_*+\gamma_2),$ we have $\frac{1}{2}<\frac{p_*}{p}<1,$ $\varepsilon < 8d_*.$ Thus, from (\ref{ce*}), (\ref{de*}), (\ref{eq330}) we obtain
\begin{eqnarray}\label{eq332a}
&& \int_{A_*(\varepsilon)} \left\|y_0(s)- y_0(s)\left\|y_0(s)\right\|^{\frac{p_* -p}{p}} r^{\frac{p - p_*}{p}}\right\| \mu(ds) \nonumber \\ && \leq \int_{A_*(\varepsilon)} \left\|y_0(s)\right\| \mu(ds) + \int_{A_*(\varepsilon)} \left\|y_0(s)\right\|^{\frac{p_*}{p}} r^{\frac{p - p_*}{p}} \mu(ds) \nonumber \\
&& \leq \frac{\varepsilon^2}{64d_*^2} \mu\left(A_*(\varepsilon)\right) + c_* \left(\frac{\varepsilon^2}{64 d_*^2} \right)^{\frac{p_*}{p}} \mu\left(A_*(\varepsilon)\right) \leq \frac{\varepsilon}{8d_*} \mu\left(\Omega\right) + c_* \left(\frac{\varepsilon^2}{64d_*^2} \right)^{\frac{1}{2}} \mu\left(\Omega\right) \nonumber \\
&& \leq \frac{\varepsilon}{8(\mu(\Omega)+1) (c_*+1)} \mu\left(\Omega\right) + c_* \frac{\varepsilon}{8(\mu(\Omega)+1) (c_*+1)} \mu\left(\Omega\right) \nonumber \\
&& \leq  \frac{\varepsilon}{8} + \frac{\varepsilon}{8} =\frac{\varepsilon}{4}
\end{eqnarray}

Choose an arbitrary $s\in A^*(\varepsilon).$ Then according to (\ref{eq331})  we have
\begin{eqnarray}\label{eq334}
1-\left(\frac{64rd_*^2}{\varepsilon^2}\right)^\frac{p-p_*}{p}<1-\left( \frac{r}{\left\|y_0(s)\right\|}\right)^\frac{p-p_*}{p} \leq 1-\left( \frac{r}{\alpha}\right)^\frac{p-p_*}{p} .
\end{eqnarray}

From inclusion $p\in (p_*,p_*+\gamma_2)$ and (\ref{eq323}) it follows that, $p-p_*<\gamma_2(\varepsilon,\alpha) \leq \sigma_4(\varepsilon, \alpha).$ Since $\alpha >\alpha_*(\varepsilon) \geq 2r,$ then applying (\ref{eq324}) it is possible to show that
\begin{eqnarray}\label{eq335}
1- \left( \frac{r}{\alpha}\right)^{\frac{p-p_*}{p}} < \frac{\varepsilon}{4\alpha \mu(\Omega)}.
\end{eqnarray}

Now, for $p\in (p_*,p_*+\gamma_2)$ we have that $p-p_*<\gamma_2(\varepsilon,\alpha) \leq \sigma_5(\varepsilon, \alpha).$ Taking into consideration (\ref{eq325}), the inequality $\varepsilon <\sigma_* \leq 4d_*\sqrt{r},$ it is not difficult to verify that
\begin{eqnarray}\label{eq336}
- \frac{\varepsilon}{4\alpha\mu(\Omega)} <1-\left(\frac{64rd_*^2}{ \varepsilon^2}\right)^{\frac{p-p_*}{p}}.
\end{eqnarray}

The inclusion $y_0(\cdot)\in B_{\Omega,\mathcal{X},p_*}^{\alpha}(r),$ (\ref{eq334}), (\ref{eq335}) and (\ref{eq336}) imply that
\begin{eqnarray}\label{eq339}
\int_{A^*(\varepsilon)} \left\|y_0(s)- y_0(s)\left\|y_0(s)\right\|^{\frac{p_* -p}{p}} r^{\frac{p - p_*}{p}}\right\| \mu(ds) &=& \int_{A^*(\varepsilon)}\left\|y_0(s)\right\|\left|1-\left( \frac{r}{\left\|y_0(s)\right\|}\right)^\frac{p-p_*}{p}\right| \mu(ds) \nonumber \\  &\leq &  \alpha \frac{\varepsilon}{4\alpha \mu(\Omega)} \mu(A^*(\varepsilon))\leq \frac{\varepsilon}{4}
\end{eqnarray} for $p\in (p_*,p_*+\gamma_2).$ Finally, (\ref{eq332}), (\ref{eq332a}) and (\ref{eq339}) yield that
\begin{eqnarray*}
\left\|y_0(\cdot)-y_p(\cdot)\right\|_1 \leq  \frac{\varepsilon}{4} + \frac{\varepsilon}{4}=\frac{\varepsilon}{2}
\end{eqnarray*} for  $p\in (p_*,p_*+\gamma_2).$

Since $p\in (p_*,p_*+\gamma_2),$ $y_0(\cdot)\in B_{\Omega,\mathcal{X},p_*}^{\alpha}(r)$ are arbitrarily chosen and $y_p(\cdot)\in B_{\Omega,\mathcal{X},p}^{\alpha}(r),$ we obtain the proof of the inclusion (\ref{eq322a}).

Let $\alpha(\varepsilon)=2\alpha_*(\varepsilon),$  $\delta_2(\varepsilon)=\gamma_2(\varepsilon,\alpha(\varepsilon))$ where $\gamma_2(\varepsilon,\alpha(\varepsilon))$ is defined  by (\ref{eq323}) as $\alpha=\alpha(\varepsilon)$ and  $\alpha_*(\varepsilon)>0$ is defined  by (\ref{alep}).
Then  according to (\ref{eq322a}), for every $p\in \left(p_*,p_*+\delta_2(\varepsilon)\right)$ the inclusion
\begin{eqnarray}\label{eq340}
B_{\Omega,\mathcal{X},p_*}^{\alpha(\varepsilon)}(r) \subset B_{\Omega,\mathcal{X},p}^{\alpha(\varepsilon)}(r) +\frac{\varepsilon}{2} B_{\Omega,\mathcal{X},1}(1)
\end{eqnarray} is held.

Proposition \ref{prop2.1} implies that for $\alpha(\varepsilon)$ the inclusions
\begin{eqnarray}\label{eq341}
B_{\Omega,\mathcal{X},p}(r) \subset B_{\Omega,\mathcal{X},p}^{\alpha(\varepsilon)}(r) +\frac{\varepsilon}{4} B_{\Omega,\mathcal{X},1}(1), \ \
B_{\Omega,\mathcal{X},p}^{\alpha(\varepsilon)}(r) \subset B_{\Omega,\mathcal{X},p}(r) +\frac{\varepsilon}{4} B_{\Omega,\mathcal{X},1}(1)
\end{eqnarray} are satisfied for every $p\in \left[\frac{p_*+1}{2},2p_*\right].$

Since $\delta_2(\varepsilon) \leq p_*,$ then (\ref{eq340}) and (\ref{eq341}) imply that
\begin{eqnarray*}
B_{\Omega,\mathcal{X},p_*}(r) \subset B_{\Omega,\mathcal{X},p_*}^{\alpha(\varepsilon)}(r) +\frac{\varepsilon}{4} B_{\Omega,\mathcal{X},1}(1) \subset B_{\Omega,\mathcal{X},p}^{\alpha(\varepsilon)}(r) +\frac{3\varepsilon}{4} B_{\Omega,\mathcal{X},1}(1) \subset B_{\Omega,\mathcal{X},p}(r) +\varepsilon B_{\Omega,\mathcal{X},1}(1)
\end{eqnarray*} for every $p\in \left(p_*,p_*+\delta_2(\varepsilon)\right).$
\end{proof}

From Proposition \ref{prop3.1} and Proposition \ref{prop3.2} it follows that the set valued map $p\rightarrow B_{\Omega,\mathcal{X},p}(r),$ $p\in \left[\frac{p_*+1}{2},2p_*\right],$ is lower semicontinuous at $p_*.$
Denote
\begin{eqnarray}\label{eq343}
\delta_*(\varepsilon)=\min \left\{\delta_1(\varepsilon), \delta_2(\varepsilon)\right\}
\end{eqnarray} where $\delta_1(\varepsilon)$ and $\delta_2(\varepsilon)$ are defined in  Proposition \ref{prop3.1} and Proposition \ref{prop3.2} respectively. Since $\delta_1(\varepsilon)\in \left(0,\frac{p_*-1}{2}\right],$ $\delta_2(\varepsilon)\in \left(0,p_*\right],$ then from (\ref{eq343}) we obtain that $\delta_*(\varepsilon)\in \left(0,\frac{p_*-1}{2}\right].$

\begin{theorem}\label{teo3.1}
Assume $p_*>1,$ $\varepsilon \in \left(0, \sigma_*\right).$ Then for every $p\in \left(p_*-\delta_*(\varepsilon),p_*+\delta_*(\varepsilon)\right)$ the inclusion
\begin{eqnarray*}
B_{\Omega,\mathcal{X},p_*}(r) \subset B_{\Omega,\mathcal{X},p}(r) +\varepsilon B_{\Omega,\mathcal{X},1}(1)
\end{eqnarray*} holds where $\sigma_* >0$ is defined by (\ref{sigma*}), $\delta_*(\varepsilon)\in \left(0,\frac{p_*-1}{2}\right]$ is defined by (\ref{eq343}).
\end{theorem}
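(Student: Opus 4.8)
Theorem \ref{teo3.1} is essentially just a bookkeeping consolidation of Proposition \ref{prop3.1} and Proposition \ref{prop3.2}, so the proof I would write is short. First I would recall the definition (\ref{eq343}), namely $\delta_*(\varepsilon)=\min\{\delta_1(\varepsilon),\delta_2(\varepsilon)\}$, and note that the preceding discussion already established $\delta_*(\varepsilon)\in\left(0,\frac{p_*-1}{2}\right]$, so in particular the interval $\left(p_*-\delta_*(\varepsilon),p_*+\delta_*(\varepsilon)\right)$ is contained in $\left[\frac{p_*+1}{2},2p_*\right]$, which is where all the earlier results apply. Then I would simply split into cases according to the position of $p$ relative to $p_*$.

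The three cases are: if $p\in\left(p_*-\delta_*(\varepsilon),p_*\right)$, then since $\delta_*(\varepsilon)\le\delta_1(\varepsilon)$ we have $p\in\left(p_*-\delta_1(\varepsilon),p_*\right)$, and Proposition \ref{prop3.1} gives $B_{\Omega,\mathcal{X},p_*}(r)\subset B_{\Omega,\mathcal{X},p}(r)+\varepsilon B_{\Omega,\mathcal{X},1}(1)$ directly. If $p\in\left(p_*,p_*+\delta_*(\varepsilon)\right)$, then since $\delta_*(\varepsilon)\le\delta_2(\varepsilon)$ we have $p\in\left(p_*,p_*+\delta_2(\varepsilon)\right)$, and Proposition \ref{prop3.2} gives the same inclusion. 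Finally, if $p=p_*$, the inclusion is trivial since $B_{\Omega,\mathcal{X},p_*}(r)\subset B_{\Omega,\mathcal{X},p_*}(r)\subset B_{\Omega,\mathcal{X},p_*}(r)+\varepsilon B_{\Omega,\mathcal{X},1}(1)$. Combining the three cases yields the claim for every $p$ in the stated interval.

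There is essentially no obstacle here: all the analytic work (the Chebyshev truncation estimate of Proposition \ref{prop2.1}, the reparametrization $x_p(s)=x_0(s)\|x_0(s)\|^{(p_*-p)/p}r^{(p-p_*)/p}$, and the delicate choices of $\sigma_1,\dots,\sigma_5$ that make the logarithmic inequalities work) has already been carried out in Propositions \ref{prop3.1} and \ref{prop3.2}. The only thing one must be a little careful about is that $\delta_*(\varepsilon)$ is genuinely positive and no larger than what both propositions require, which is immediate from (\ref{eq343}) and the ranges $\delta_1(\varepsilon)\in\left(0,\frac{p_*-1}{2}\right]$, $\delta_2(\varepsilon)\in(0,p_*]$ recorded just before the theorem. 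So I would present this as a three-line case split and move on to the upper-semicontinuity part in Section 4.
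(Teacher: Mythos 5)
Your proposal is correct and matches the paper exactly: the paper states Theorem \ref{teo3.1} without a separate proof, as an immediate consequence of Proposition \ref{prop3.1} and Proposition \ref{prop3.2} combined via $\delta_*(\varepsilon)=\min\{\delta_1(\varepsilon),\delta_2(\varepsilon)\}$ from (\ref{eq343}). Your three-way case split (including the trivial case $p=p_*$) is precisely the intended argument.
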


\section{Upper Semicontinuty}

In this section it will be proved that the set valued map $p\rightarrow B_{\Omega,\mathcal{X},p}(r)$, $p>1,$ is upper semicontinuous.

\begin{proposition}\label{prop4.1}
Suppose $p_*>1,$ $\varepsilon \in \left(0, \sigma_*\right).$ Then there exists $\delta_3(\varepsilon) \in\left(0, \frac{p_*-1}{2}\right]$ such that for every $p\in \left(p_*, p_*-\delta_3(\varepsilon)\right)$ the inclusion
\begin{eqnarray*}
B_{\Omega,\mathcal{X},p}(r) \subset B_{\Omega,\mathcal{X},p_*}(r) +\varepsilon B_{\Omega,\mathcal{X},1}(1)
\end{eqnarray*} is satisfied where $\sigma_* >0$ is defined by (\ref{sigma*}).
\end{proposition}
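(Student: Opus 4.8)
The plan is to run, in reverse, the three-step argument of Section~3. Fix $\alpha=2\alpha_*(\varepsilon)$, so that $\alpha>\alpha_*(\varepsilon)\ge 2r>r$, and restrict attention to $p<p_*$ with $p_*-p<\delta_3(\varepsilon)$, where the cap $\delta_3(\varepsilon)\le\frac{p_*-1}{2}$ guarantees $p\in\left[\frac{p_*+1}{2},2p_*\right]$ (the range in which Proposition~\ref{prop2.1} is available) and also keeps $\frac{p}{p_*}>\frac{1}{2}$. By Proposition~\ref{prop2.1},
\[
B_{\Omega,\mathcal{X},p}(r)\subset B_{\Omega,\mathcal{X},p}^{\alpha}(r)+\frac{\varepsilon}{4}B_{\Omega,\mathcal{X},1}(1),\qquad B_{\Omega,\mathcal{X},p_*}^{\alpha}(r)\subset B_{\Omega,\mathcal{X},p_*}(r)+\frac{\varepsilon}{4}B_{\Omega,\mathcal{X},1}(1),
\]
so it suffices to produce $\delta_3(\varepsilon)\in\left(0,\frac{p_*-1}{2}\right]$ such that
\[
B_{\Omega,\mathcal{X},p}^{\alpha}(r)\subset B_{\Omega,\mathcal{X},p_*}^{\alpha}(r)+\frac{\varepsilon}{2}B_{\Omega,\mathcal{X},1}(1)\qquad\text{for every }p\in\left(p_*-\delta_3(\varepsilon),p_*\right);
\]
chaining the three inclusions then yields $B_{\Omega,\mathcal{X},p}(r)\subset B_{\Omega,\mathcal{X},p_*}(r)+\varepsilon B_{\Omega,\mathcal{X},1}(1)$.

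For the core inclusion, given $y_0(\cdot)\in B_{\Omega,\mathcal{X},p}^{\alpha}(r)$ I would define
\[
y_{p_*}(s)=y_0(s)\,\|y_0(s)\|^{\frac{p-p_*}{p_*}}\,r^{\frac{p_*-p}{p_*}},\qquad s\in\Omega,
\]
so that $\|y_{p_*}(s)\|=r\bigl(\|y_0(s)\|/r\bigr)^{p/p_*}$ and hence $\int_{\Omega}\|y_{p_*}(s)\|^{p_*}\mu(ds)=r^{p_*-p}\int_{\Omega}\|y_0(s)\|^{p}\mu(ds)\le r^{p_*}$, that is, $\|y_{p_*}(\cdot)\|_{p_*}\le r$. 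Since $0<p/p_*<1$ and $\alpha>r$, for $\|y_0(s)\|\le\alpha$ one gets $\|y_{p_*}(s)\|\le r(\alpha/r)^{p/p_*}<\alpha$, so $y_{p_*}(\cdot)\in B_{\Omega,\mathcal{X},p_*}^{\alpha}(r)$. Thus, in contrast with Proposition~\ref{prop3.1}, here the truncation level need not be enlarged; the situation is the mirror image of Proposition~\ref{prop3.2}, with the roles of $p$ and $p_*$ exchanged in the rescaling.

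It then remains to estimate $\|y_0(\cdot)-y_{p_*}(\cdot)\|_1=\int_{\Omega}\|y_0(s)\|\,\bigl|1-(r/\|y_0(s)\|)^{\frac{p_*-p}{p_*}}\bigr|\,\mu(ds)$. I would split $\Omega$ into the sets $A_*(\varepsilon)$ and $A^*(\varepsilon)$ of (\ref{eq330}) and (\ref{eq331}). On $A_*(\varepsilon)$ the integrand is at most $\|y_0(s)\|+\|y_0(s)\|^{p/p_*}r^{(p_*-p)/p_*}$, and, using $p/p_*>\frac{1}{2}$, $\varepsilon<\sigma_*$ and the definitions (\ref{ce*}) and (\ref{de*}) of $c_*$ and $d_*$ exactly as in the chain of inequalities leading to (\ref{eq332a}), the contribution of $A_*(\varepsilon)$ is $\le\varepsilon/4$. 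On $A^*(\varepsilon)$ one has
\[
1-\left(\frac{64rd_*^2}{\varepsilon^2}\right)^{\frac{p_*-p}{p_*}}<1-\left(\frac{r}{\|y_0(s)\|}\right)^{\frac{p_*-p}{p_*}}\le 1-\left(\frac{r}{\alpha}\right)^{\frac{p_*-p}{p_*}},
\]
and choosing $\delta_3(\varepsilon)$ below two logarithmic thresholds — the analogues of $\sigma_4(\varepsilon,\alpha)$ and $\sigma_5(\varepsilon,\alpha)$ from (\ref{eq324}) and (\ref{eq325}), in fact slightly simpler here since the exponent $\frac{p_*-p}{p_*}$ has a constant denominator — forces $\bigl|1-(r/\|y_0(s)\|)^{\frac{p_*-p}{p_*}}\bigr|<\frac{\varepsilon}{4\alpha\mu(\Omega)}$, so that the contribution of $A^*(\varepsilon)$ is $\le\alpha\cdot\frac{\varepsilon}{4\alpha\mu(\Omega)}\cdot\mu(\Omega)=\varepsilon/4$. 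Adding the two parts gives $\|y_0(\cdot)-y_{p_*}(\cdot)\|_1\le\varepsilon/2$, which is the core inclusion; finally $\delta_3(\varepsilon)$ is taken to be the minimum of these two thresholds and $\frac{p_*-1}{2}$.

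As in Section~3, there is no genuinely new idea; the whole difficulty lies in the bookkeeping of this last step — choosing $\delta_3(\varepsilon)$, through explicit logarithms in $\alpha$, $r$, $\mu(\Omega)$ and $d_*$, small enough that both the ``$\|y_0(s)\|$ near $0$'' tail and the ``$\|y_0(s)\|$ of moderate size'' bulk are each dominated by $\varepsilon/4$, while at the same time keeping $\delta_3(\varepsilon)\le\frac{p_*-1}{2}$, so that $p$ stays in $\left[\frac{p_*+1}{2},2p_*\right]$ (required for Proposition~\ref{prop2.1}) and $p/p_*$ stays above $\frac{1}{2}$ (required for the $A_*(\varepsilon)$ estimate).
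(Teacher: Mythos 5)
Your proposal is correct and follows essentially the same route as the paper's own proof: the same rescaling $z_p(s)\mapsto z_p(s)\left\|z_p(s)\right\|^{\frac{p-p_*}{p_*}}r^{\frac{p_*-p}{p_*}}$ on the truncated ball, the same split of $\Omega$ at the threshold $\left(\frac{\varepsilon}{8d_*}\right)^2$, the same two logarithmic thresholds (the paper calls them $\tau_1$ and $\tau_2$, and they are indeed the ``constant-denominator'' variants of $\sigma_4$ and $\sigma_5$), and the same final chaining via Proposition \ref{prop2.1} with $\alpha=2\alpha_*(\varepsilon)$. You also correctly read the interval in the statement as $\left(p_*-\delta_3(\varepsilon),p_*\right)$, which is what the paper's misprinted $\left(p_*,p_*-\delta_3(\varepsilon)\right)$ must mean.
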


\begin{proof}
Let us show that for $\varepsilon \in \left(0, \sigma_*\right)$ and $\alpha >\alpha_*(\varepsilon)$  there exists $\gamma_3=\gamma_3(\varepsilon,\alpha) \in\left(0, \frac{p_*-1}{2}\right]$ such that for every $p\in \left(p_*-\gamma_3, p_*\right)$ the inclusion
\begin{eqnarray}\label{eq344a}
B_{\Omega,\mathcal{X},p}^{\alpha}(r) \subset B_{\Omega,\mathcal{X},p_*}^{\alpha}(r) +\frac{\varepsilon}{2} B_{\Omega,\mathcal{X},1}(1)
\end{eqnarray} is satisfied where $\alpha_*(\varepsilon)>0$ is defined by (\ref{alep}).

We set
\begin{eqnarray}\label{eq344}
\gamma_3(\varepsilon,\alpha)=\min \left\{ \tau_1 (\varepsilon, \alpha), \tau_2 (\varepsilon, \alpha), \frac{p_*-1}{2} \right\}
\end{eqnarray}
where
\begin{eqnarray}\label{eq345}
\tau_1 (\varepsilon,\alpha)=p_* \log_{\displaystyle \frac{r}{\alpha}} \displaystyle \left(1-\frac{\varepsilon}{4\alpha \mu(\Omega)}\right),
\end{eqnarray}
\begin{eqnarray}\label{eq346}
\tau_2(\varepsilon,\alpha)=p_* \log_{\displaystyle \frac{64rd_*^2}{ \varepsilon^2}} \displaystyle \left(1+ \frac{\varepsilon}{4\alpha\mu(\Omega)}\right).
\end{eqnarray}

Since $\alpha>\alpha_*(\varepsilon)\geq 2r$ and $0<\varepsilon <\sigma_* \leq 4r\mu(\Omega),$ we have that $\tau_1(\varepsilon,\alpha)>0.$ Also, the inequality $\varepsilon <\sigma_* \leq 4\sqrt{r}d_*$ implies that $\frac{16rd_*^2}{\varepsilon^2}>1$ and consequently $\tau_2(\varepsilon,\alpha)>0.$ Thus, (\ref{eq344}) implies that $\gamma_3=\gamma_3(\varepsilon, \alpha)\in \left(0,\frac{p_*-1}{2}\right].$

Choose an arbitrary $p\in \left(p_*-\gamma_3, p_*\right)$ and fix it. Now let us choose an arbitrary $z_p(\cdot)\in B_{\Omega,\mathcal{X},p}^{\alpha}(r)$ and define new function $z_*(\cdot):\Omega\rightarrow \mathcal{X}$ setting
\begin{eqnarray}\label{eq347}
z_*(s)= z_p(s) \left\| z_p(s)\right\|^{\frac{p-p_*}{p_*}} r^{\frac{p_*-p}{p_*}}, \ \ s\in \Omega.
\end{eqnarray}

One can to specify that  $\left\|z_*(\cdot)\right\|_{p_*} \leq r.$
Now, since $p<p_*,$ $\alpha >\alpha_*(\varepsilon)\geq 2r,$ then from (\ref{eq347}) and inclusion $z_p(\cdot)\in B_{\Omega,\mathcal{X},p}^{\alpha}(r)$ we have
\begin{eqnarray*}\label{eq349}
\left\|z_*(s)\right\|= r\cdot \left(\frac{\left\|z_p(s)\right\|}{r}\right)^{\frac{p}{p_*}} \leq r\cdot \left(\frac{\alpha}{r}\right)^{\frac{p}{p_*}} \leq r\cdot \frac{\alpha}{r}=\alpha
\end{eqnarray*}
which implies that $z_*(\cdot)\in B_{\Omega,\mathcal{X},p_*}^{\alpha}(r).$

Denote
\begin{eqnarray}\label{eq350}
B_*(\varepsilon)=\left\{ s\in \Omega: 0\leq \left\| z_p(s) \right\| \leq  \left(\frac{\varepsilon}{8d_{*}}\right)^2\right\},
\end{eqnarray}
\begin{eqnarray}\label{eq351}
B^*(\varepsilon)=\left\{ s \in \Omega: \left(\frac{\varepsilon}{8d_{*}}\right)^{2} < \left\| z_p(s) \right\| \leq  \alpha\right\}
\end{eqnarray} where $d_*$ is defined by (\ref{de*}).

Since $\varepsilon <\sigma_*\leq 4d_*,$ $\displaystyle p\in \left(p_*-\gamma_3,p_*\right) \subset \left[ \frac{p_*+1}{2}, 2p_* \right]$, then we have  $\displaystyle \frac{\varepsilon}{8d_{*}}<1,$ $\displaystyle \frac{p}{p_{*}}>\frac{1}{2}.$ Thus, from (\ref{ce*}), (\ref{de*}) and (\ref{eq350}) it follows that
\begin{eqnarray}\label{eq353}
&& \int_{B_*(\varepsilon)} \left\|z_p(s)- z_p(s)\left\|z_p(s)\right\|^{\frac{p-p_*}{p_*}} r^{\frac{p_* - p}{p_*}}\right\| \mu(ds) \nonumber \\ && \leq  \int_{B_*(\varepsilon)} \left\|z_p(s)\right\| \mu(ds) + \int_{B_*(\varepsilon)}\left\|z_p(s)\right\|^{\frac{p}{p_*}} r^{\frac{p_*- p}{p_*}} \mu(ds) \nonumber \\ && \leq  \left(\frac{\varepsilon}{8d_{*}}\right)^2\mu(B_*(\varepsilon))+ c_* \left[\left(\frac{\varepsilon}{8d_{*}}\right)^2\right]^{\frac{p}{p_*}} \mu(B_*(\varepsilon))
\leq  \frac{\varepsilon}{8d_{*}}\mu(B_*(\varepsilon))+ c_* \left[\left(\frac{\varepsilon}{8d_{*}}\right)^2\right]^{\frac{1}{2}} \mu(B_*(\varepsilon))
\nonumber \\
&& = \frac{\varepsilon}{8(\mu(\Omega)+1)(c_{*}+1)}\mu(B_*(\varepsilon))+ c_* \frac{\varepsilon}{8(\mu(\Omega+1)(c_{*}+1)} \mu(B_*(\varepsilon)) < \frac{\varepsilon}{8}+ \frac{\varepsilon}{8}=\frac{\varepsilon}{4}.
\end{eqnarray}

Let us choose an arbitrary $s\in B^*(\varepsilon).$ Since $p\in (p_*-\gamma_3,p_*)$, then $\displaystyle \frac{p_* - p}{p_*}>0.$ Now, (\ref{eq351}) implies that
\begin{eqnarray}\label{eq356}
1-\left(\frac{64rd_{*}^2}{\varepsilon^2}\right)^{\frac{p_* - p}{p_*}} < 1-\left(\frac{r}{\left\| z_p(s) \right\|}\right)^{\frac{p_* - p}{p_*}} \leq  1-\left(\frac{r}{\alpha}\right)^{\frac{p_* - p}{p_*}}.
\end{eqnarray}

Taking into consideration that $p_*-p<\gamma_3 \leq \tau_1(\varepsilon,\alpha),$ $\alpha >\alpha_*(\varepsilon)\geq 2r$ and equality (\ref{eq345}) one can verify that
\begin{eqnarray}\label{eq357}
 1-\left(\frac{r}{\alpha}\right)^{\frac{p_* - p}{p_*}}<\frac{\varepsilon}{4\alpha \mu(\Omega)},
\end{eqnarray}

Similarly, since $p_*-p<\gamma_3 \leq \tau_2(\varepsilon,\alpha),$ $\displaystyle \varepsilon <\sigma_* \leq 4d_*\sqrt{r},$ then applying  (\ref{eq346}) we obtain that
\begin{eqnarray}\label{eq358}
-\frac{\varepsilon}{4\alpha \mu(\Omega)}< 1-\left(\frac{64rd_{*}^2}{\varepsilon^2}\right)^{\frac{p_* - p}{p_*}}.
\end{eqnarray}

Since $s\in B^*(\varepsilon)$ is an arbitrarily fixed, $z_p(\cdot)\in B_{\Omega,\mathcal{X},p}^{\alpha}(r)$, we have from (\ref{eq356}), (\ref{eq357}) and (\ref{eq358}) that
\begin{eqnarray}\label{eq360}
\int_{B^*(\varepsilon)} \left\|z_p(s)- z_p(s)\left\|z_p(s)\right\|^{\frac{p-p_*}{p_*}} r^{\frac{p_* - p}{p_*}}\right\| \mu(ds) &=&\int_{B^*(\varepsilon)} \left\|z_p(s)\right\|\left| 1-\left(\frac{r}{\left\| z_p(s) \right\|}\right)^{\frac{p_* - p}{p_*}} \right| \mu(ds) \nonumber \\ & \leq & \alpha \frac{\varepsilon}{4\alpha \mu (\Omega)} \mu(B^*(\varepsilon)) \leq \frac{\varepsilon}{4}.
\end{eqnarray}

On behalf of (\ref{eq350}), (\ref{eq351}), (\ref{eq353}) and (\ref{eq360}) we conclude
\begin{eqnarray}\label{eq361}
\left\|z_p(\cdot)-z_*(\cdot)\right\|_1 \leq \frac{\varepsilon}{4}+\frac{\varepsilon}{4}=\frac{\varepsilon}{2} .
\end{eqnarray}

Since $z_p(\cdot)\in B_{\Omega,\mathcal{X},p}^{\alpha}(r)$ is an arbitrarily chosen function,  $z_*(\cdot)\in B_{\Omega,\mathcal{X},p_*}^{\alpha}(r)$ then the inequality (\ref{eq361}) gives us the proof of the inclusion (\ref{eq344a}).

Let $\alpha(\varepsilon)=2\alpha_*(\varepsilon)$ where $\alpha_*(\varepsilon)>0$ is defined  by (\ref{alep}). By virtue of (\ref{eq344a}) we obtain that for $\delta_3(\varepsilon)=\gamma_3(\varepsilon,\alpha(\varepsilon)) \in\left(0, \frac{p_*-1}{2}\right]$ the inclusion
\begin{eqnarray}\label{eq362}
B_{\Omega,\mathcal{X},p}^{\alpha(\varepsilon)}(r) \subset B_{\Omega,\mathcal{X},p_*}^{\alpha(\varepsilon)}(r) +\frac{\varepsilon}{2} B_{\Omega,1}(1)
\end{eqnarray} is satisfied for every $p\in \left(p_*, p_*-\delta_3(\varepsilon)\right)$ where $\gamma_3(\varepsilon,\alpha(\varepsilon))>0$  is defined by (\ref{eq344}).

Now, on behalf of Proposition \ref{prop2.1} we have that for $\alpha(\varepsilon)$ the inclusions
\begin{eqnarray}\label{eq363}
B_{\Omega,\mathcal{X},p}(r) \subset B_{\Omega,\mathcal{X},p}^{\alpha(\varepsilon)}(r) +\frac{\varepsilon}{4} B_{\Omega,\mathcal{X},1}(1), \ \
B_{\Omega,\mathcal{X},p}^{\alpha(\varepsilon)}(r) \subset B_{\Omega,\mathcal{X},p}(r) +\frac{\varepsilon}{4} B_{\Omega,\mathcal{X},1}(1)
\end{eqnarray} are satisfied for every $p\in \left[\frac{p_*+1}{2},2p_*\right].$

From (\ref{eq362}) and (\ref{eq363})  it follows that for every $p\in \left(p_*, p_*-\delta_3(\varepsilon)\right)$ the inclusions
\begin{eqnarray*}
B_{\Omega,\mathcal{X},p}(r) \subset B_{\Omega,\mathcal{X},p}^{\alpha(\varepsilon)}(r) +\frac{\varepsilon}{4} B_{\Omega,\mathcal{X},1}(1) \subset
B_{\Omega,\mathcal{X},p_*}^{\alpha(\varepsilon)}(r) +\frac{3\varepsilon}{4} B_{\Omega,\mathcal{X},1}(1) \subset B_{\Omega,\mathcal{X},p_*}(r) +\varepsilon B_{\Omega,\mathcal{X},1}(1).
\end{eqnarray*} are verified.

The proof is completed.
\end{proof}

\begin{proposition}\label{prop4.2}
Assume $p_*>1,$ $\varepsilon \in \left(0, \sigma_*\right).$  Then there exists $\delta_4(\varepsilon) \in\left(0, p_*\right]$ such that for every $p\in \left(p_*,p_*+\delta_4 (\varepsilon) \right)$ the inclusion
\begin{eqnarray*}
B_{\Omega,\mathcal{X},p}(r) \subset B_{\Omega,\mathcal{X},p_*}(r) +\varepsilon B_{\Omega,\mathcal{X},1}(1)
\end{eqnarray*} is verified where $\sigma_* >0$ is defined by (\ref{sigma*}).
\end{proposition}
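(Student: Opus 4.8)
The plan is to repeat the two-stage scheme used for Proposition \ref{prop4.1}, but now in the regime $p>p_*$. As in Proposition \ref{prop3.1}, the exponent $p/p_*$ then exceeds $1$, so a single truncation level no longer suffices and one has to pass between two levels $\alpha_1<\alpha_2$. Concretely, I would first prove that for every $\varepsilon\in(0,\sigma_*)$ and every $\alpha_2>\alpha_1>\alpha_*(\varepsilon)$ there exists $\gamma_4=\gamma_4(\varepsilon,\alpha_1,\alpha_2)\in\left(0,p_*\right]$ such that
\[
B_{\Omega,\mathcal{X},p}^{\alpha_1}(r)\subset B_{\Omega,\mathcal{X},p_*}^{\alpha_2}(r)+\frac{\varepsilon}{2}\,B_{\Omega,\mathcal{X},1}(1)\qquad\text{for all }p\in(p_*,p_*+\gamma_4).
\]
Here $\gamma_4$ is taken as the minimum of $p_*$ and of three expressions of the form $p_*\cdot(\text{logarithm})$: one forcing $(\alpha_1/r)^{p/p_*}<\alpha_2/r$, i.e. keeping the truncation level below $\alpha_2$ (the analogue of $\sigma_3$ in Proposition \ref{prop3.1}, here taking the plain shape $p_*\log_{\alpha_1/r}(\alpha_2/\alpha_1)$ because the exponent $\frac{p-p_*}{p_*}$ carries $p_*$ in the denominator), and two others controlling the two-sided deviation of $(\|z_p(s)\|/r)^{(p-p_*)/p_*}$ from $1$ on the set where $\|z_p(s)\|$ is bounded away from $0$ (the analogues of $\tau_1,\tau_2$ from Proposition \ref{prop4.1}). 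Positivity of all three logarithms is read off from $\alpha_1/r\ge 2$ and $\varepsilon<\sigma_*\le\min\{4r\mu(\Omega),4d_*,4d_*\sqrt{r}\}$, exactly as in the earlier propositions.

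\textbf{The map and the $L_1$ estimate.} Given $p\in(p_*,p_*+\gamma_4)$ and $z_p(\cdot)\in B_{\Omega,\mathcal{X},p}^{\alpha_1}(r)$, I would define $z_*(\cdot)$ by the same formula as in (\ref{eq347}), now read with $p>p_*$:
\[
z_*(s)=z_p(s)\,\|z_p(s)\|^{\frac{p-p_*}{p_*}}\,r^{\frac{p_*-p}{p_*}},\qquad s\in\Omega,
\]
so that $\|z_*(s)\|=r\bigl(\|z_p(s)\|/r\bigr)^{p/p_*}$ and hence $\|z_*(\cdot)\|_{p_*}^{p_*}=r^{p_*-p}\|z_p(\cdot)\|_p^{p}\le r^{p_*}$. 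Since $\|z_p(s)\|\le\alpha_1$ and $\gamma_4$ enforces $(\alpha_1/r)^{p/p_*}<\alpha_2/r$, this gives $\|z_*(s)\|\le\alpha_2$, so $z_*(\cdot)\in B_{\Omega,\mathcal{X},p_*}^{\alpha_2}(r)$. For the estimate of $\|z_p(\cdot)-z_*(\cdot)\|_1$ I would split $\Omega$ at the level $(\varepsilon/8d_*)^2$ into $B_*(\varepsilon)$ and $B^*(\varepsilon)$ exactly as in (\ref{eq350})--(\ref{eq351}). On $B_*(\varepsilon)$ the point is that $\varepsilon<\sigma_*\le 4d_*\sqrt{r}$ forces $\|z_p(s)\|\le(\varepsilon/8d_*)^2<r$, whence $p/p_*>1$ yields $\|z_*(s)\|=r(\|z_p(s)\|/r)^{p/p_*}\le\|z_p(s)\|$; combining this with $(\varepsilon/8d_*)^2\le\varepsilon/(8d_*)$ and $d_*=(\mu(\Omega)+1)(c_*+1)$ bounds $\int_{B_*(\varepsilon)}\|z_p(s)-z_*(s)\|\,\mu(ds)$ by $\varepsilon/4$. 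On $B^*(\varepsilon)$ the two logarithmic constraints built into $\gamma_4$ give $\bigl|1-(\|z_p(s)\|/r)^{(p-p_*)/p_*}\bigr|<\varepsilon/(4\alpha_1\mu(\Omega))$, and since $\|z_p(s)\|\le\alpha_1$ the integral over $B^*(\varepsilon)$ is at most $\varepsilon/4$. Adding the two contributions yields $\|z_p(\cdot)-z_*(\cdot)\|_1\le\varepsilon/2$, which establishes the truncated inclusion.

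\textbf{Stage 2.} Finally I would put $\alpha_1(\varepsilon)=2\alpha_*(\varepsilon)$, $\alpha_2(\varepsilon)=3\alpha_*(\varepsilon)$ and $\delta_4(\varepsilon)=\gamma_4(\varepsilon,\alpha_1(\varepsilon),\alpha_2(\varepsilon))\in\left(0,p_*\right]$. For $p\in(p_*,p_*+\delta_4(\varepsilon))\subset\left[\frac{p_*+1}{2},2p_*\right]$, Proposition \ref{prop2.1} applied at $\alpha_1(\varepsilon)$ and at $\alpha_2(\varepsilon)$ gives
\[
B_{\Omega,\mathcal{X},p}(r)\subset B_{\Omega,\mathcal{X},p}^{\alpha_1(\varepsilon)}(r)+\frac{\varepsilon}{4}B_{\Omega,\mathcal{X},1}(1),\qquad B_{\Omega,\mathcal{X},p_*}^{\alpha_2(\varepsilon)}(r)\subset B_{\Omega,\mathcal{X},p_*}(r)+\frac{\varepsilon}{4}B_{\Omega,\mathcal{X},1}(1),
\]
and chaining these with the Stage 1 inclusion (as at the end of Proposition \ref{prop3.1}) yields $B_{\Omega,\mathcal{X},p}(r)\subset B_{\Omega,\mathcal{X},p_*}(r)+\varepsilon B_{\Omega,\mathcal{X},1}(1)$ for every $p\in(p_*,p_*+\delta_4(\varepsilon))$, which is the claim.

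\textbf{Main obstacle.} The only genuinely new difficulty, relative to Propositions \ref{prop3.2} and \ref{prop4.1}, is the estimate on $B_*(\varepsilon)$: since $p>p_*$ makes $p/p_*>1$, the factor $r^{(p_*-p)/p_*}$ occurring in $\|z_*(s)\|$ is a \emph{negative} power of $r$ and is therefore not dominated by $c_*$, in contrast with the corresponding step of Proposition \ref{prop3.2}. This is precisely what the term $4d_*\sqrt{r}$ in the definition (\ref{sigma*}) of $\sigma_*$ is there to handle: it guarantees $\|z_p(s)\|<r$ on $B_*(\varepsilon)$, which turns that factor into a quantity $\le 1$ and lets the crude bound $\|z_*(s)\|\le\|z_p(s)\|$ go through. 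Once this is in place, everything else is the same bookkeeping already performed in Propositions \ref{prop3.1}, \ref{prop3.2} and \ref{prop4.1}.
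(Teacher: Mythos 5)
Your proposal is correct and follows the paper's own two-stage scheme almost verbatim: the same rescaling map, the same pair of truncation levels $\alpha_1<\alpha_2$ with a threshold forcing $\left(\alpha_1/r\right)^{p/p_*}<\alpha_2/r$ (the paper's $\tau_3$ in (\ref{eq366})), and the same final chaining via Proposition \ref{prop2.1} with $\alpha_1(\varepsilon)=2\alpha_*(\varepsilon)$, $\alpha_2(\varepsilon)=3\alpha_*(\varepsilon)$. The one genuine divergence is the $L_1$ estimate on the set of small values of $\|z_p(s)\|$: you split $\Omega$ at the level $(\varepsilon/8d_*)^2$ as in Propositions \ref{prop3.2} and \ref{prop4.1}, and rescue the small-values integral by observing that $\|z_p(s)\|<r$ there forces $\|z_*(s)\|\le\|z_p(s)\|$, which correctly sidesteps the fact (you diagnose it accurately) that $c_*$ does not dominate $r^{(p_*-p)/p_*}$ when $p>p_*$. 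The paper instead splits at $\varepsilon/(4\mu(\Omega))$, as in Proposition \ref{prop3.1} (see (\ref{eq374})): since the exponent $\frac{p-p_*}{p_*}$ is positive, one has $0\le 1-\left(\|v_p(s)\|/r\right)^{\frac{p-p_*}{p_*}}\le 1$ on that set, so the integral is at most $\frac{\varepsilon}{4\mu(\Omega)}\,\mu(\Omega)=\frac{\varepsilon}{4}$ with no reference to $c_*$ or $d_*$ at all. Both routes close; the paper's is the cleaner one here, while yours carries the small extra cost that the threshold controlling the upper deviation on the complementary set must use the base $64rd_*^2/\varepsilon^2$ rather than the base $4r\mu(\Omega)/\varepsilon$ appearing in the paper's $\tau_4$ of (\ref{eq367}) --- you only gesture at ``analogues of $\tau_1,\tau_2$'' without writing them, but the required logarithms are positive under $\varepsilon<\sigma_*$, so nothing breaks.
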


\begin{proof}
At the beginning let us show that for $\varepsilon \in \left(0, \sigma_*\right)$ and $\alpha_2 >\alpha_1 >\alpha_*(\varepsilon)$  there exists $\gamma_4=\gamma_4(\varepsilon,\alpha_1,\alpha_2) \in\left(0, p_*\right]$ such that for every $p\in \left(p_*,p_*+\gamma_4 \right)$ the inclusion
\begin{eqnarray}\label{eq365a}
B_{\Omega,\mathcal{X},p}^{\alpha_1}(r) \subset B_{\Omega,\mathcal{X},p_*}^{\alpha_2}(r) +\frac{\varepsilon}{2} B_{\Omega,\mathcal{X},1}(1)
\end{eqnarray} is satisfied where $\alpha_*(\varepsilon)>0$ is defined by (\ref{alep}).

Denote
\begin{eqnarray}\label{eq365}
\gamma_4(\varepsilon,\alpha_1,\alpha_2)=\min \left\{ \tau_3 (\alpha_1, \alpha_2), \tau_4 (\varepsilon, \alpha_1), \tau_5 (\varepsilon, \alpha_1), p_* \right\}
\end{eqnarray}
where
\begin{eqnarray}\label{eq366}
\tau_3 (\alpha_1, \alpha_2)=p_*\log_{\displaystyle \frac{\alpha_1}{r}}\frac{\alpha_2}{\alpha_1}
\end{eqnarray}
\begin{eqnarray}\label{eq367}
\tau_4 (\varepsilon, \alpha_1)= p_* \log_{\displaystyle \frac{\varepsilon}{4r\mu(\Omega)}} \left(1-\frac{\varepsilon}{4\alpha_1\mu(\Omega)}\right)
\end{eqnarray}
\begin{eqnarray}\label{eq368}
 \tau_5 (\varepsilon, \alpha_1) = p_* \log_{\displaystyle \frac{\alpha_1}{r}} \left(1+\frac{\varepsilon}{4\alpha_1\mu(\Omega)}\right).
\end{eqnarray}

Since $\alpha_2>\alpha_1 >\alpha_*(\varepsilon)\geq 2r,$ we have that $ \tau_3 (\alpha_1, \alpha_2)>0.$ From inequalities $\varepsilon <\sigma_*\leq 4r\mu(\Omega),$ $\alpha_1 >2r$ it follows that $\tau_4 (\varepsilon, \alpha_1)>0$ and $\tau_5 (\varepsilon, \alpha_1)>0.$ Thus, from (\ref{eq365}) we conclude that $\gamma_4(\varepsilon,\alpha_1,\alpha_2)\in (0,p_*].$

Choose an arbitrary $p\in (p_*,p_*+\gamma_4)$ and fix it. Now let us choose an arbitrary $v_p(\cdot)\in B_{\Omega,\mathcal{X},p}^{\alpha_1}(r)$ and define new function setting
\begin{eqnarray}\label{eq369}
v_*(s)= v_p(s) \left\| v_p(s)\right\|^{\frac{p-p_*}{p_*}} r^{\frac{p_*-p}{p_*}}, \ \ s\in \Omega.
\end{eqnarray}

It is possible to verify that $\left\|v_*(\cdot)\right\|_{p_*}\leq r.$

The inequalities  $\alpha_2>\alpha_1 >\alpha_*(\varepsilon)\geq 2r,$ $0<p-p_* < \gamma_4 \leq \tau_3(\alpha_1,\alpha_2)$ and (\ref{eq365}) imply that
\begin{eqnarray*}
\left(\frac{\alpha_1}{r}\right)^{\frac{p}{p_*}} < \frac{\alpha_2}{r}.
\end{eqnarray*}

Since $v_p(\cdot)\in B_{\Omega,\mathcal{X},p}^{\alpha_1}(r),$ then (\ref{eq369}) and the last inequality yield that
\begin{eqnarray*}
\left\|v_*(s)\right\|= r\left(\frac{\left\| v_p(s)\right\|}{r}\right)^{\frac{p}{p_*}} \leq r\left(\frac{\alpha_1}{r}\right)^{\frac{p}{p_*}}
 < r\cdot \frac{\alpha_2}{r}=\alpha_2.
\end{eqnarray*}

So, we obtain  $v_*(\cdot)\in B_{\Omega,\mathcal{X},p_*}^{\alpha_2}(r).$

According to (\ref{eq369}) we have that
\begin{eqnarray}\label{eq373}
\left\|v_*(\cdot)-v_p(\cdot)\right\|_1 =\int_{\Omega} \left\|v_p(s)\right\|\cdot \left|1- \left(\frac{\left\|v_p(s)\right\|}{r}\right)^{\frac{p-p_*}{p_*}} \right| \mu(ds) .
\end{eqnarray}

Let us set
\begin{eqnarray}\label{eq374}
C_*(\varepsilon)=\left\{ s\in \Omega: 0\leq \left\| v_p(s) \right\| \leq  \frac{\varepsilon}{4\mu(\Omega)}\right\}, \  \ \ C^*(\varepsilon)=\left\{ s \in \Omega: \frac{\varepsilon}{4\mu(\Omega)} < \left\| v_p(s) \right\| \leq  \alpha_1\right\}.
\end{eqnarray}

Choose an arbitrary $s\in C_*(\varepsilon).$ Since $\varepsilon \in (0,\sigma_*),$ $\sigma_* \leq 4r\mu(\Omega),$ $\frac{p-p_*}{p_*}>0,$ then \begin{eqnarray*}
0\leq 1- \left(\frac{\left\|v_p(s)\right\|}{r}\right)^{\frac{p-p_*}{p_*}} \leq 1 \ ,
\end{eqnarray*}
and hence
\begin{eqnarray}\label{eq376}
\int_{C_*(\varepsilon)} \left\|v_p(s)\right\|\cdot \left|1- \left(\frac{\left\|v_p(s)\right\|}{r}\right)^{\frac{p-p_*}{p_*}}\right| ds \leq \frac{\varepsilon}{4\mu(\Omega)} \cdot \mu(C_*(\varepsilon))\leq \frac{\varepsilon}{4}.
\end{eqnarray}

Now let us assume that $s\in C^*(\varepsilon).$ Then
\begin{eqnarray}\label{eq377}
1- \left(\frac{\alpha_1}{r}\right)^{\frac{p-p_*}{p_*}}\leq 1- \left(\frac{\left\|v_p(s)\right\|}{r}\right)^{\frac{p-p_*}{p_*}} \leq 1- \left(\frac{\varepsilon}{4\mu (\Omega)r}\right)^{\frac{p-p_*}{p_*}}
\end{eqnarray} for every $s\in C^*(\varepsilon).$

From inequalities $p-p_*<\gamma_4(\varepsilon,\alpha_1,\alpha_2))\leq \tau_4\left(\varepsilon,\alpha_1\right),$ $\varepsilon <\sigma_* \leq 4r\mu(\Omega)$ and (\ref{eq367}) it follows that
\begin{eqnarray}\label{eq378}
1- \left(\frac{\varepsilon}{4\mu (\Omega)r}\right)^{\frac{p-p_*}{p_*}} < \frac{\varepsilon}{4\mu(\Omega)\alpha_1}.
\end{eqnarray}

Now similarly, from the inequalities $\alpha_1 > \alpha_*(\varepsilon)\geq 2r,$ $p-p_*<\gamma_4 \leq \tau_5\left(\varepsilon,\alpha_1\right)$ and (\ref{eq368}) it is possible to obtain the validity of the inequality
\begin{eqnarray}\label{eq379}
-\frac{\varepsilon}{4\mu(\Omega)\alpha_1} <  1- \left(\frac{\alpha_1}{r}\right)^{\frac{p-p_*}{p_*}}.
\end{eqnarray}

By virtue of inclusion $v_p(\cdot)\in B_{\Omega,p}^{\alpha_1}(r),$ (\ref{eq374}), (\ref{eq377}), (\ref{eq378}) and (\ref{eq379}) we have
\begin{eqnarray}\label{eq381}
\int_{C^*(\varepsilon)} \left\|v_p(s)\right\|\cdot \left|1- \left(\frac{\left\|v_p(s)\right\|}{r}\right)^{\frac{p-p_*}{p_*}}\right| ds \leq \alpha_1 \frac{\varepsilon}{4\alpha_1 \mu(\Omega)} \cdot \mu(C^*(\varepsilon))\leq \frac{\varepsilon}{4}.
\end{eqnarray}

Finally, (\ref{eq373}), (\ref{eq374}), (\ref{eq376}) and (\ref{eq381}) yield
\begin{eqnarray}\label{eq382}
\left\|v_*(\cdot)-v_p(\cdot)\right\|_1 \leq  \frac{\varepsilon}{4}+\frac{\varepsilon}{4}=\frac{\varepsilon}{2}
\end{eqnarray}

Since $p\in (p_*,p_*+\gamma_4),$ $v_p(\cdot)\in B_{\Omega,\mathcal{X},p}^{\alpha_1}(r)$ are arbitrarily chosen, $v_*(\cdot)\in B_{\Omega,\mathcal{X},p}^{\alpha_2}(r),$ then  (\ref{eq382}) completes the proof of the inclusion (\ref{eq365a}).

Let $\alpha_1(\varepsilon)=2\alpha_*(\varepsilon),$ $\alpha_2(\varepsilon)=3\alpha_*(\varepsilon),$ $\delta_4(\varepsilon)=\gamma_4(\varepsilon,\alpha_1(\varepsilon),\alpha_2(\varepsilon))$ where $\gamma_4(\varepsilon,\alpha_1(\varepsilon),\alpha_2(\varepsilon))$ is defined  by (\ref{eq365}) as $\alpha_1=\alpha_1(\varepsilon)$, $\alpha_2=\alpha_2(\varepsilon)$ and $\alpha_*(\varepsilon)>0$ is defined  by (\ref{alep}).
Then  according to  (\ref{eq365a}) we have that for every $p\in \left(p_*,p_*+\delta_4(\varepsilon)\right)$ the inclusion
\begin{eqnarray}\label{eq383}
B_{\Omega,\mathcal{X},p}^{\alpha_1(\varepsilon)}(r) \subset B_{\Omega,\mathcal{X},p_*}^{\alpha_2(\varepsilon)}(r) +\frac{\varepsilon}{2} B_{\Omega,\mathcal{X},1}(1)
\end{eqnarray} is satisfied.

By virtue of Propositions \ref{prop2.1} for $\alpha_1(\varepsilon)=2\alpha_*(\varepsilon)$ and $\alpha_2(\varepsilon)=3\alpha_*(\varepsilon)$  the inclusions
\begin{eqnarray}\label{eq384}
B_{\Omega,\mathcal{X},p}(r) \subset B_{\Omega,\mathcal{X},p}^{\alpha_1(\varepsilon)}(r) +\frac{\varepsilon}{4} B_{\Omega,\mathcal{X},1}(1), \ \
B_{\Omega,\mathcal{X},p}^{\alpha_2(\varepsilon)}(r) \subset B_{\Omega,\mathcal{X},p}(r) +\frac{\varepsilon}{4} B_{\Omega,\mathcal{X},1}(1)
\end{eqnarray} are satisfied for every $p\in \left[\frac{p_*+1}{2},2p_*\right]$

Since $\delta_4(\varepsilon) \leq p_*,$ then (\ref{eq383}) and (\ref{eq384}) imply that
\begin{eqnarray*}
B_{\Omega,\mathcal{X},p}(r) \subset B_{\Omega,\mathcal{X},p}^{\alpha_1(\varepsilon)}(r) +\frac{\varepsilon}{4} B_{\Omega,\mathcal{X},1}(1) \subset B_{\Omega,\mathcal{X},p_*}^{\alpha_2(\varepsilon)}(r) +\frac{3\varepsilon}{4} B_{\Omega,\mathcal{X},1}(1) \subset B_{\Omega,\mathcal{X},p_*}(r) +\varepsilon B_{\Omega,\mathcal{X},1}(1)
\end{eqnarray*} for every $p\in \left(p_*,p_*+\delta_4(\varepsilon)\right).$
\end{proof}

From Proposition \ref{prop4.1} and Proposition \ref{prop4.2} we obtain that the set valued map $p\rightarrow B_{\Omega,\mathcal{X},p}(r),$ $p\in \left[\frac{p_*+1}{2},2p_*\right],$ is upper semicontinuous at $p_*.$
Let us set
\begin{eqnarray}\label{eq386}
\delta^*(\varepsilon)=\min \left\{\delta_3(\varepsilon), \delta_4(\varepsilon)\right\}
\end{eqnarray} where $\delta_3(\varepsilon)$ and $\delta_4(\varepsilon)$ are defined in  Proposition \ref{prop4.1} and Proposition \ref{prop4.2} respectively. Since $\delta_3(\varepsilon) \in\left(0, \frac{p_*-1}{2}\right],$ $\delta_4(\varepsilon) \in\left(0, p_*\right],$ then  (\ref{eq386}) imply that $\delta^*(\varepsilon) \in\left(0, \frac{p_*-1}{2}\right].$

\begin{theorem}\label{teo4.1}
Assume $p_*>1,$ $\varepsilon \in \left(0, \sigma_*\right).$ Then  for every $p\in \left(p_*-\delta^*(\varepsilon),p_*+\delta^*(\varepsilon)\right)$ the inclusion
\begin{eqnarray*}
B_{\Omega,\mathcal{X},p}(r) \subset B_{\Omega,\mathcal{X},p_*}(r) +\varepsilon B_{\Omega,\mathcal{X},1}(1)
\end{eqnarray*} is held where $\sigma_* >0$ is defined by (\ref{sigma*}), $\delta^*(\varepsilon) \in\left(0, \frac{p_*-1}{2}\right]$ is defined by (\ref{eq386}).
\end{theorem}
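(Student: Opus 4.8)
The plan is to assemble the conclusion directly from Proposition~\ref{prop4.1} and Proposition~\ref{prop4.2} by a simple case distinction on the position of $p$ relative to $p_*$. Recall from (\ref{eq386}) that $\delta^*(\varepsilon)=\min\{\delta_3(\varepsilon),\delta_4(\varepsilon)\}$, so in particular $\delta^*(\varepsilon)\le\delta_3(\varepsilon)$ and $\delta^*(\varepsilon)\le\delta_4(\varepsilon)$; hence the interval $\left(p_*-\delta^*(\varepsilon),p_*+\delta^*(\varepsilon)\right)$ is covered by the union $\left(p_*-\delta_3(\varepsilon),p_*\right)\cup\{p_*\}\cup\left(p_*,p_*+\delta_4(\varepsilon)\right)$, which is exactly the range handled by the two propositions together with the trivial value $p=p_*$.

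First I would treat the case $p\in\left(p_*-\delta^*(\varepsilon),p_*\right)$. Since $\delta^*(\varepsilon)\le\delta_3(\varepsilon)$, we have $p\in\left(p_*-\delta_3(\varepsilon),p_*\right)$, so Proposition~\ref{prop4.1} yields $B_{\Omega,\mathcal{X},p}(r)\subset B_{\Omega,\mathcal{X},p_*}(r)+\varepsilon B_{\Omega,\mathcal{X},1}(1)$ at once. Next, for $p=p_*$ the inclusion is immediate, because the origin lies in $B_{\Omega,\mathcal{X},1}(1)$ and therefore $B_{\Omega,\mathcal{X},p_*}(r)\subset B_{\Omega,\mathcal{X},p_*}(r)+\varepsilon B_{\Omega,\mathcal{X},1}(1)$. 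Finally, for $p\in\left(p_*,p_*+\delta^*(\varepsilon)\right)$ the inequality $\delta^*(\varepsilon)\le\delta_4(\varepsilon)$ gives $p\in\left(p_*,p_*+\delta_4(\varepsilon)\right)$, and Proposition~\ref{prop4.2} again provides the desired inclusion. Combining the three cases completes the argument.

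I do not anticipate any genuine obstacle here: all the analytic content — the explicit truncation estimates via $B_{\Omega,\mathcal{X},p}^{\alpha}(r)$, the reparametrization of the norm in (\ref{eq347}) and (\ref{eq369}), and the calibrated choices of the thresholds $\gamma_3,\gamma_4$ and hence $\delta_3,\delta_4$ — has already been carried out inside Propositions~\ref{prop4.1} and \ref{prop4.2}, and Theorem~\ref{teo4.1} is merely their bookkeeping synthesis. The only points requiring a moment's care are purely formal: checking that $\delta^*(\varepsilon)\in\left(0,\frac{p_*-1}{2}\right]$, which follows from $\delta_3(\varepsilon)\in\left(0,\frac{p_*-1}{2}\right]$, $\delta_4(\varepsilon)\in\left(0,p_*\right]$ and (\ref{eq386}) exactly as noted immediately before the statement, and observing that $\varepsilon\in(0,\sigma_*)$ is the common hypothesis of both propositions so no additional smallness condition is incurred.
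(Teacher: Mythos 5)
Your proposal is correct and is exactly the argument the paper intends: Theorem~\ref{teo4.1} is stated as an immediate consequence of Propositions~\ref{prop4.1} and~\ref{prop4.2} via the definition (\ref{eq386}) of $\delta^*(\varepsilon)$ as the minimum of $\delta_3(\varepsilon)$ and $\delta_4(\varepsilon)$, with the trivial case $p=p_*$ handled as you describe. You also correctly read the interval in Proposition~\ref{prop4.1} as $\left(p_*-\delta_3(\varepsilon),p_*\right)$, which is what its proof actually establishes despite the typographical slip in its statement.
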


\section{Main Result}

From Theorem \ref{teo3.1} and Theorem \ref{teo4.1} we obtain the main result, the continuity of the  set valued map  $p\rightarrow B_{\Omega,\mathcal{X},p}(r),$ $p\in \left(1,+\infty\right).$

\begin{theorem}\label{teo5.1}
The set valued map  $p\rightarrow B_{\Omega,\mathcal{X},p}(r),$ $p\in \left(1,+\infty\right)$ is continuous, i.e. for each fixed $p_*>1$ and $\varepsilon \in \left(0, \sigma_*\right)$ there exists $\delta_0(\varepsilon)\in\left(0, \frac{p_*-1}{2}\right]$ such that for every $p\in \left(p_*-\delta_0(\varepsilon),p_*+\delta_0(\varepsilon)\right)$ the inequality
\begin{eqnarray*}
\mathcal{H}_1 \left(B_{\Omega,\mathcal{X},p}(r), B_{\Omega,\mathcal{X},p_*}(r)\right) \leq \varepsilon
\end{eqnarray*} is verified where $\sigma_* >0$ is defined by (\ref{sigma*}),
\begin{eqnarray*}
\delta_0(\varepsilon)=\min \left\{\delta_*(\varepsilon), \delta^*(\varepsilon)\right\},
\end{eqnarray*} $\delta_*(\varepsilon)\in\left(0, \frac{p_*-1}{2}\right]$ and $\delta^*(\varepsilon)\in\left(0, \frac{p_*-1}{2}\right]$ are defined in  Theorem \ref{teo3.1} and Theorem \ref{teo4.1} respectively, $\mathcal{H}_1(\cdot,\cdot)$ is defined by (\ref{haus}).
\end{theorem}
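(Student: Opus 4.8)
The plan is to assemble Theorem \ref{teo5.1} directly from the two one-sided inclusions already established, namely Theorem \ref{teo3.1} (which handles the inclusion $B_{\Omega,\mathcal{X},p_*}(r) \subset B_{\Omega,\mathcal{X},p}(r) + \varepsilon B_{\Omega,\mathcal{X},1}(1)$ for $p$ near $p_*$) and Theorem \ref{teo4.1} (which handles the reverse inclusion $B_{\Omega,\mathcal{X},p}(r) \subset B_{\Omega,\mathcal{X},p_*}(r) + \varepsilon B_{\Omega,\mathcal{X},1}(1)$). Fix $p_* > 1$ and $\varepsilon \in (0,\sigma_*)$, and set $\delta_0(\varepsilon) = \min\{\delta_*(\varepsilon), \delta^*(\varepsilon)\}$. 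Since $\delta_*(\varepsilon) \in \left(0,\frac{p_*-1}{2}\right]$ and $\delta^*(\varepsilon) \in \left(0,\frac{p_*-1}{2}\right]$, we have $\delta_0(\varepsilon) \in \left(0,\frac{p_*-1}{2}\right]$, so in particular every $p \in (p_*-\delta_0(\varepsilon), p_*+\delta_0(\varepsilon))$ lies in $\left[\frac{p_*+1}{2}, 2p_*\right]$, which is the common domain on which the earlier results apply.

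Next I would fix an arbitrary $p \in (p_*-\delta_0(\varepsilon), p_*+\delta_0(\varepsilon))$. Because $\delta_0(\varepsilon) \leq \delta_*(\varepsilon)$, Theorem \ref{teo3.1} gives
\begin{eqnarray*}
B_{\Omega,\mathcal{X},p_*}(r) \subset B_{\Omega,\mathcal{X},p}(r) + \varepsilon B_{\Omega,\mathcal{X},1}(1),
\end{eqnarray*}
and because $\delta_0(\varepsilon) \leq \delta^*(\varepsilon)$, Theorem \ref{teo4.1} gives
\begin{eqnarray*}
B_{\Omega,\mathcal{X},p}(r) \subset B_{\Omega,\mathcal{X},p_*}(r) + \varepsilon B_{\Omega,\mathcal{X},1}(1).
\end{eqnarray*}
The first inclusion means that for every $w(\cdot) \in B_{\Omega,\mathcal{X},p_*}(r)$ there is $y(\cdot) \in B_{\Omega,\mathcal{X},p}(r)$ with $\|w(\cdot) - y(\cdot)\|_1 \leq \varepsilon$, hence $d_1(w(\cdot), B_{\Omega,\mathcal{X},p}(r)) \leq \varepsilon$; taking the supremum over $w(\cdot)$ yields $\sup_{w(\cdot) \in B_{\Omega,\mathcal{X},p_*}(r)} d_1(w(\cdot), B_{\Omega,\mathcal{X},p}(r)) \leq \varepsilon$. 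Symmetrically, the second inclusion yields $\sup_{y(\cdot) \in B_{\Omega,\mathcal{X},p}(r)} d_1(y(\cdot), B_{\Omega,\mathcal{X},p_*}(r)) \leq \varepsilon$. By the definition \eqref{haus} of $\mathcal{H}_1$ as the maximum of these two suprema, we conclude $\mathcal{H}_1(B_{\Omega,\mathcal{X},p}(r), B_{\Omega,\mathcal{X},p_*}(r)) \leq \varepsilon$, which is the desired estimate. Since $p$ was arbitrary in the stated interval, this establishes continuity of the set valued map at $p_*$, and since $p_* > 1$ was arbitrary, on all of $(1,+\infty)$.

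There is essentially no obstacle here: all the analytic work — the truncation estimate of Proposition \ref{prop2.1} and the delicate choices of $\delta_1, \delta_2, \delta_3, \delta_4$ via the logarithmic thresholds — has already been carried out in Sections 3 and 4. The only points requiring a word of care are (i) checking that the interval $(p_*-\delta_0(\varepsilon), p_*+\delta_0(\varepsilon))$ is contained in $\left[\frac{p_*+1}{2}, 2p_*\right]$ so that the prior theorems are applicable (this follows from $\delta_0(\varepsilon) \leq \frac{p_*-1}{2} < p_*$), and (ii) the routine translation between the inclusion formulation "$A \subset B + \varepsilon B_{\Omega,\mathcal{X},1}(1)$" and the one-sided Hausdorff bound "$\sup_{a \in A} d_1(a, B) \leq \varepsilon$", which is immediate from the definitions of $B_{\Omega,\mathcal{X},1}(1)$ and $d_1$. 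Thus the proof is a short bookkeeping argument combining the two semicontinuity theorems.
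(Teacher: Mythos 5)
Your proposal is correct and follows exactly the route the paper takes: Theorem \ref{teo5.1} is obtained by intersecting the neighbourhoods from Theorem \ref{teo3.1} and Theorem \ref{teo4.1} via $\delta_0(\varepsilon)=\min\{\delta_*(\varepsilon),\delta^*(\varepsilon)\}$ and translating the two inclusions into the two one-sided suprema in the definition (\ref{haus}) of $\mathcal{H}_1$. The paper leaves this assembly implicit ("From Theorem \ref{teo3.1} and Theorem \ref{teo4.1} we obtain the main result"), so your write-up simply makes the same bookkeeping explicit.
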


\begin{remark} Note that the numbers $\delta_*(\varepsilon),$ $\delta^*(\varepsilon)$ and hence the number $\delta_0(\varepsilon)$ also depend on $p_*.$ For simplicity of the expressions we do not write $p_*$ in their formulas.
\end{remark}

\section{Application}

Suppose $\left(\mathcal{X},\left\|\cdot \right\|\right)$ and $\left(\mathcal{Y},\left\|\cdot \right\|_{\mathcal{Y}}\right)$ are separable Banach spaces, $(\Omega, \Sigma, \mu)$ and $(\Omega_0, \Sigma_0, \mu_0)$ are finite and positive measure spaces, $(\Omega_*, \Sigma_*, \mu_*)$ is the product of the measure spaces $(\Omega, \Sigma, \mu)$ and $(\Omega_0, \Sigma_0, \mu_0).$

Consider input-output system described by Urysohn type integral operator
\begin{eqnarray}\label{eq61}
F(x(\cdot))|(\xi)=\int_{\Omega} K(\xi,s,x(s)) \mu (ds) \ \ \ \mbox{for $\mu_0$-almost all (a.a.)} \  \xi \in \Omega_0
\end{eqnarray} where $x(\cdot)\in B_{\Omega,\mathcal{X},p}(r)$ is the input, $F(x(\cdot))|(\cdot)$ is the output of the system, generated by the input $x(\cdot),$ $B_{\Omega,\mathcal{X},p}(r)$ is defined by (\ref{bep}).

The integral operators are applied for description of the behaviour of different type input-output systems. It should be underlined that the integral models have some advantages over differential ones, since the outputs for such systems can be defined as continuous, even as p-integrable functions. Note that the inputs $x(\cdot)\in B_{\Omega,\mathcal{X},p}(r),$ i.e. the integrally constrained inputs characterise the system inputs which are exhausted by consumption such as energy, fuel, finance etc. This kind of inputs are widely used in control systems theory where the system's control resource is limited (see, e.g. \cite{gusev}, \cite{kra}, \cite{rou}, \cite{sub}).

It is assumed that the function $K(\cdot,\cdot,\cdot):\Omega_0 \times \Omega \times \mathcal{X} \rightarrow \mathcal{Y}$ satisfies the following conditions:

\vspace{3mm}

6.A. The function $K(\cdot,\cdot,x):\Omega_0 \times \Omega  \rightarrow \mathcal{Y}$ \ is $\mu_*$-measurable for every $x\in \mathcal{X}$ and $K(\cdot,\cdot,0)\in L_{1}\left(\Omega_*, \Sigma_*, \mu_*; \mathcal{Y}\right)$.

\vspace{3mm}

6.B. There exists a function $\psi(\cdot,\cdot):\Omega_0\times \Omega \rightarrow [0,+\infty)$ such that $\psi (\xi,\cdot)\in L_{\infty}\left(\Omega,\Sigma,\mu;[0,\infty)\right)$ for $\mu_0$-a.a. $\xi \in \Omega_0,$ $\varphi (\cdot) \in L_{1}(\Omega_0,\Sigma_0,\mu_0;[0,\infty)),$ where $\varphi(\xi)=\left\|\psi (\xi,\cdot)\right\|_{\infty},$ and the inequality
\begin{eqnarray*}
\left\| K(\xi,s,x_1) -K(\xi,s,x_2)\right\|_{\mathcal{Y}} \leq \psi (\xi,s) \left\|x_1-x_2\right\|
\end{eqnarray*} is satisfied for every $x_1\in \mathcal{X}$ and $x_2\in \mathcal{X}$ and $\mu_*$-a.a. $(\xi,s)\in \Omega_0\times \Omega.$

Here $L_{\infty}\left(\Omega,\Sigma, \mu;[0,\infty)\right)$ is the space of all (equivalence classes of) $\mu$-measurable functions $v(\cdot):\Omega \rightarrow \left[0,\infty \right)$ such that $\left\|v(\cdot)\right\|_{\infty} <+\infty$ where $\left\|v(\cdot)\right\|_{\infty}=\inf \left\{ \rho>0: v(s)\leq  \rho \ \mbox{for $\mu$-a.a.} \ s \in \Omega \right\}.$

\vspace{3mm}

From conditions 6.A and 6.B it follows that for every input $x(\cdot)\in B_{\Omega,\mathcal{X},p}(r)$ the inclusion $F(x(\cdot))|(\cdot)\in  L_{1}\left(\Omega_0, \Sigma_0, \mu_0; \mathcal{Y}\right)$ is satisfied where $F(x(\cdot))|(\cdot)$ is the output of the system (\ref{eq61}) generated by the input $x(\cdot).$

Denote
\begin{eqnarray}\label{psi*k*}
\psi_*= \left\| \varphi(\cdot)\right\|_{1},  \  \ \ k_*=\left\| K(\cdot,\cdot,0)\right\|_{1} \, ,
\end{eqnarray}
\begin{eqnarray}\label{psi*0}
\theta_* =\max \left\{1, \mu(\Omega) \right\}, \ \ \psi_0= \psi_* \theta_* r+k_*.
\end{eqnarray}

We set
\begin{eqnarray}\label{eq62}
\mathcal{F}_{p}(r)= \left\{F(x(\cdot))|(\cdot): x(\cdot)\in B_{\Omega,\mathcal{X},p}(r) \right\}.
\end{eqnarray} The set $\mathcal{F}_{p}(r)$ is called the set of outputs of the system (\ref{eq61}). It is obvious that the set $\mathcal{F}_{p}(r)$ is the image of the closed ball $B_{\Omega,\mathcal{X},p}(r)\subset L_p(\Omega,\Sigma,\mu;\mathcal{X})$ under operator $F(\cdot)$ given by (\ref{eq61}). The approximate construction of the set of outputs of the input-output systems described by different type integral operators are considered in papers \cite{hus1}, \cite{hus2}, \cite{pol} (see the references also therein).

The set  $\mathcal{F}_{p}(r)$ is a bounded subset of the space $L_{1}(\Omega_0,\Sigma_0,\mu_0;\mathcal{Y})$ for every $p>1.$

\begin{proposition}\label{prop6.1}
For every $y(\cdot)\in \mathcal{F}_{p}(r)$ the inequality $\left\|y(\cdot)\right\|_{1} \leq \psi_0$ is satisfied where $\psi_0$ is defined by (\ref{psi*0}).
\end{proposition}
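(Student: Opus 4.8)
The plan is to fix an arbitrary $y(\cdot)\in\mathcal{F}_{p}(r)$, write $y(\cdot)=F(x(\cdot))|(\cdot)$ for some input $x(\cdot)\in B_{\Omega,\mathcal{X},p}(r)$, estimate $\|y(\xi)\|_{\mathcal{Y}}$ pointwise in $\xi$, and then integrate over $\Omega_0$. First, for $\mu_0$-a.a. $\xi\in\Omega_0$ the triangle inequality for the Bochner integral together with condition 6.B gives
\[
\|y(\xi)\|_{\mathcal{Y}}\leq\int_{\Omega}\|K(\xi,s,x(s))\|_{\mathcal{Y}}\,\mu(ds)\leq\int_{\Omega}\psi(\xi,s)\|x(s)\|\,\mu(ds)+\int_{\Omega}\|K(\xi,s,0)\|_{\mathcal{Y}}\,\mu(ds).
\]
Using $\psi(\xi,\cdot)\in L_{\infty}(\Omega,\Sigma,\mu;[0,\infty))$ and the definition $\varphi(\xi)=\|\psi(\xi,\cdot)\|_{\infty}$ one bounds the first integral by $\varphi(\xi)\|x(\cdot)\|_1$.

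Next I would control $\|x(\cdot)\|_1$ by $r$. Since $x(\cdot)\in B_{\Omega,\mathcal{X},p}(r)$ and $\mu(\Omega)<+\infty$, Hölder's inequality yields $\|x(\cdot)\|_1\leq[\mu(\Omega)]^{\frac{p-1}{p}}\|x(\cdot)\|_p\leq[\mu(\Omega)]^{\frac{p-1}{p}}r$, and since $\frac{p-1}{p}\in(0,1)$ we have $[\mu(\Omega)]^{\frac{p-1}{p}}\leq\max\{1,\mu(\Omega)\}=\theta_*$. Hence $\|x(\cdot)\|_1\leq\theta_*r$, and therefore for $\mu_0$-a.a. $\xi\in\Omega_0$
\[
\|y(\xi)\|_{\mathcal{Y}}\leq\varphi(\xi)\,\theta_*\,r+\int_{\Omega}\|K(\xi,s,0)\|_{\mathcal{Y}}\,\mu(ds).
\]

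Finally I would integrate this inequality over $\Omega_0$ with respect to $\mu_0$. By 6.B the function $\varphi(\cdot)$ belongs to $L_{1}(\Omega_0,\Sigma_0,\mu_0;[0,\infty))$, so $\theta_*r\int_{\Omega_0}\varphi(\xi)\,\mu_0(d\xi)=\psi_*\theta_*r$; by 6.A the function $K(\cdot,\cdot,0)$ lies in $L_{1}(\Omega_*,\Sigma_*,\mu_*;\mathcal{Y})$, and the Tonelli theorem on the product space $(\Omega_*,\Sigma_*,\mu_*)$ identifies the iterated integral of $\|K(\xi,s,0)\|_{\mathcal{Y}}$ with $\|K(\cdot,\cdot,0)\|_1=k_*$. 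Consequently
\[
\|y(\cdot)\|_1\leq\psi_*\theta_*r+k_*=\psi_0,
\]
as claimed. No step is a genuine obstacle; the only point needing a little care is combining Hölder's inequality with the elementary bound $[\mu(\Omega)]^{\frac{p-1}{p}}\leq\theta_*$, which is precisely why the constant $\theta_*=\max\{1,\mu(\Omega)\}$ is built into (\ref{psi*0}), and the appeal to Tonelli's theorem to recognize the double integral as $k_*$.
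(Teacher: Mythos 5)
Your proof is correct and follows exactly the route the paper intends: the paper gives no details, stating only that the proposition ``follows from conditions 6.A, 6.B and Fubini's theorem,'' and your argument --- the pointwise bound $\|K(\xi,s,x(s))\|_{\mathcal{Y}}\leq\psi(\xi,s)\|x(s)\|+\|K(\xi,s,0)\|_{\mathcal{Y}}$ from 6.B, the H\"older estimate $\|x(\cdot)\|_1\leq[\mu(\Omega)]^{\frac{p-1}{p}}r\leq\theta_*r$, and Fubini--Tonelli to identify the iterated integral of $\|K(\cdot,\cdot,0)\|_{\mathcal{Y}}$ with $k_*$ --- is precisely the omitted computation, landing on $\psi_*\theta_*r+k_*=\psi_0$ as defined in (\ref{psi*0}).
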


The proof of the proposition follows from conditions 6.A, 6.B and Fubini's theorem (see, \cite{war}, Theorem 1.4.45).

The following proposition asserts that the set valued map $p\rightarrow \mathcal{F}_{p}(r):(1,+\infty) \rightarrow L_{1}\left(\Omega_0,\Sigma_0,\mu_0;\mathcal{Y}\right)$ is continuous where the set $\mathcal{F}_{p}(r)$ is defined by (\ref{eq62}).
\begin{theorem}\label{teo6.1}
Let $p_*>1.$ Then  $\mathcal{H}_1\left(\mathcal{F}_{p}(r),\mathcal{F}_{p_*}(r)\right)\rightarrow 0$ as $p\rightarrow p_*.$
\end{theorem}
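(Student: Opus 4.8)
The plan is to reduce the continuity of $p\mapsto\mathcal{F}_p(r)$ to the already-established continuity of $p\mapsto B_{\Omega,\mathcal{X},p}(r)$ (Theorem \ref{teo5.1}) by showing that the Urysohn operator $F(\cdot)$ is Lipschitz-type from $L_1$ to $L_1$. First I would verify the key Lipschitz estimate: for $x_1(\cdot),x_2(\cdot)\in L_p(\Omega,\Sigma,\mu;\mathcal{X})$, using condition 6.B and Fubini's theorem,
\begin{eqnarray*}
\left\|F(x_1(\cdot))|(\cdot)-F(x_2(\cdot))|(\cdot)\right\|_1 &=& \int_{\Omega_0}\left\|\int_{\Omega}\bigl(K(\xi,s,x_1(s))-K(\xi,s,x_2(s))\bigr)\mu(ds)\right\|_{\mathcal{Y}}\mu_0(d\xi) \\
&\leq& \int_{\Omega_0}\int_{\Omega}\psi(\xi,s)\left\|x_1(s)-x_2(s)\right\|\mu(ds)\,\mu_0(d\xi) \\
&\leq& \int_{\Omega}\varphi(\xi)\,\mu_0(d\xi)\int_{\Omega}\left\|x_1(s)-x_2(s)\right\|\mu(ds) \;=\; \psi_*\left\|x_1(\cdot)-x_2(\cdot)\right\|_1,
\end{eqnarray*}
where $\psi_*=\|\varphi(\cdot)\|_1$ as in (\ref{psi*k*}); the interchange of integrals and the bound $\psi(\xi,s)\leq\varphi(\xi)$ for $\mu$-a.a.\ $s$ are legitimate by 6.B and Tonelli's theorem. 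This shows $F$ maps $B_{\Omega,\mathcal{X},p}(r)$ into the bounded set $\mathcal{F}_p(r)\subset L_1(\Omega_0,\Sigma_0,\mu_0;\mathcal{Y})$ and is $\psi_*$-Lipschitz with respect to the $L_1$-metrics.

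Next I would transfer this Lipschitz property to Hausdorff distances. A Lipschitz map with constant $L$ between metric spaces sends a pair of sets $Y,W$ to images with $\mathcal{H}_1(F(Y),F(W))\leq L\,\mathcal{H}_1(Y,W)$; concretely, given $x(\cdot)\in B_{\Omega,\mathcal{X},p}(r)$ and any $w(\cdot)\in B_{\Omega,\mathcal{X},p_*}(r)$ with $\|x(\cdot)-w(\cdot)\|_1$ close to $d_1(x(\cdot),B_{\Omega,\mathcal{X},p_*}(r))$, the estimate above gives $d_1(F(x(\cdot))|(\cdot),\mathcal{F}_{p_*}(r))\leq\psi_*\|x(\cdot)-w(\cdot)\|_1$, and taking the supremum over $x(\cdot)$ and the symmetric estimate yields
\begin{eqnarray*}
\mathcal{H}_1\left(\mathcal{F}_p(r),\mathcal{F}_{p_*}(r)\right)\leq\psi_*\,\mathcal{H}_1\left(B_{\Omega,\mathcal{X},p}(r),B_{\Omega,\mathcal{X},p_*}(r)\right).
\end{eqnarray*}
Since $\psi_*<+\infty$ by 6.B, the right-hand side tends to $0$ as $p\to p_*$ by Theorem \ref{teo5.1}, which completes the proof.

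I expect the only genuine subtlety to be the measurability and integrability bookkeeping needed to apply Fubini/Tonelli in the first display: one must check that $(\xi,s)\mapsto K(\xi,s,x_i(s))$ is $\mu_*$-measurable (from 6.A together with $\mu$-measurability of $x_i(\cdot)$, approximating $x_i$ by simple functions and using 6.B for continuity in the third argument), that $\xi\mapsto F(x_i(\cdot))|(\xi)$ is well-defined for $\mu_0$-a.a.\ $\xi$ and lies in $L_1(\Omega_0,\Sigma_0,\mu_0;\mathcal{Y})$ — which is exactly the content flagged after condition 6.B and underlies Proposition \ref{prop6.1} — and that the nonnegative double integral of $\psi(\xi,s)\|x_1(s)-x_2(s)\|$ is finite so that Fubini applies; finiteness follows from $\psi(\xi,s)\leq\varphi(\xi)$, $\varphi\in L_1(\Omega_0)$, and $x_1(\cdot)-x_2(\cdot)\in L_p\subset L_1$ (as $\mu$ is finite). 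All of this is routine given the hypotheses, so the proof is essentially a two-line reduction once the Lipschitz bound is in hand.
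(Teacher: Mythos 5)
Your proposal is correct and follows essentially the same route as the paper: the key step in both is the pointwise Lipschitz estimate $\left\|F(x_1(\cdot))|(\xi)-F(x_2(\cdot))|(\xi)\right\|_{\mathcal{Y}}\leq \varphi(\xi)\left\|x_1(\cdot)-x_2(\cdot)\right\|_1$ from condition 6.B, integrated over $\Omega_0$ to give the $L_1$-Lipschitz constant $\psi_*$, combined with Theorem \ref{teo5.1}; the paper merely carries out the $\varepsilon$-chasing explicitly via the two inclusions $\mathcal{F}_{p}(r)\subset\mathcal{F}_{p_*}(r)+\psi_*\varepsilon B_{\Omega_0,\mathcal{Y},1}(1)$ and its symmetric counterpart rather than quoting the general fact that Lipschitz maps contract Hausdorff distance. (Only a typo to fix: the domain of integration in $\int\varphi(\xi)\,\mu_0(d\xi)$ in your first display should be $\Omega_0$, not $\Omega$.)
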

\begin{proof}
Let $\varepsilon >0$ be fixed number. According to the Theorem \ref{teo5.1} we have that for every $\varepsilon >0$ there exists $\delta_0=\delta_0(\varepsilon, p_*) \in \left(0,\frac{p_*-1}{2}\right]$ such that for every $p\in \left(p_*-\delta_0, p_*+\delta_0\right)$ the inequality
\begin{eqnarray}\label{eq65}
\mathcal{H}_1\left( B_{\Omega,\mathcal{X},p}(r), B_{\Omega,\mathcal{X},p_*}(r)\right)\leq \varepsilon
\end{eqnarray} is held.

Let us choose arbitrary $p\in \left(p_*-\delta_0, p_*+\delta_0\right)$ and $y(\cdot)\in \mathcal{F}_{p}(r)$ which is output of the system (\ref{eq61}) generated by the input $x(\cdot)\in B_{\Omega,\mathcal{X},p}(r).$
From (\ref{eq65}) it follows that for $x(\cdot)\in B_{\Omega,\mathcal{X},p}(r)$ there exists an input $x_*(\cdot)\in B_{\Omega,\mathcal{X},p_*}(r)$ such that
\begin{eqnarray}\label{eq67}
\left\|x(\cdot)-x_*(\cdot)\right\|_1 \leq \varepsilon.
\end{eqnarray}

Now let $y_*(\cdot)\in \mathcal{F}_{p_*}(r)$ be the output of the system (\ref{eq61}) generated by the input $x_*(\cdot)\in B_{\Omega,\mathcal{X},p}(r).$ From condition 6.B and (\ref{eq67}) it follows that
\begin{eqnarray*}
\left\| y(\xi)-y_*(\xi)\right\|_{\mathcal{Y}} \leq \int_{\Omega} \psi(\xi,s)\left\|x(s)-x_*(s)\right\| \mu(ds) \leq \varphi(\xi)\left\|x(\cdot)-x_*(\cdot)\right\|_1  \leq \varphi(\xi) \varepsilon
\end{eqnarray*}for $\mu_0$-a.a.  $\xi \in \Omega_0$ and hence
\begin{eqnarray}\label{eq69}
\left\| y(\cdot)-y_*(\cdot)\right\|_1 \leq  \psi_* \varepsilon
\end{eqnarray} where $\psi_*$ is defined by (\ref{psi*k*}). Since $y(\cdot)\in \mathcal{F}_{p}(r)$ is arbitrarily chosen, $y_*(\cdot)\in \mathcal{F}_{p_*}(r)$, we obtain from (\ref{eq69}) that
\begin{eqnarray}\label{eq610}
\mathcal{F}_{p}(r) \subset \mathcal{F}_{p_*}(r)+\psi_* \varepsilon B_{\Omega_0,\mathcal{Y},1}(1)
\end{eqnarray} where
\begin{eqnarray*}
B_{\Omega_0, \mathcal{Y},1}(1)=\left\{ y(\cdot)\in L_1 \left(\Omega_0, \Sigma_0,\mu_0; \mathcal{Y}\right): \left\| y(\cdot) \right\|_1 \leq 1 \right\}.
\end{eqnarray*}

Analogously it is possible to show that
\begin{eqnarray}\label{eq611}
\mathcal{F}_{p_*}(r) \subset \mathcal{F}_{p}(r)+\psi_* \varepsilon B_{\Omega_0,\mathcal{Y},1}(1)
\end{eqnarray} for every $p\in \left(p_*-\delta_0, p_*+\delta_0\right).$ From (\ref{eq610}) and (\ref{eq611}) we have
\begin{eqnarray*}
\mathcal{H}_1 \left(\mathcal{F}_{p}(r), \mathcal{F}_{p_*}(r)\right) \leq \psi_* \varepsilon
\end{eqnarray*}for every $p\in \left(p_0-\delta_*, p_*+\delta_0\right).$

The proposition is proved.
\end{proof}

\vspace{2mm}

\begin{remark} If $\Omega \subset \mathbb{R}^{k},$ $\Omega_0 \subset \mathbb{R}^{k_0}$ are the compact sets, $\mu$ and $\mu_0$ are Lebesgue measures, $\mathcal{X}=\mathbb{R}^m,$ $\mathcal{Y}=\mathbb{R}^n,$ the function $K(\cdot, \cdot, \cdot):\Omega_0 \times \Omega \times \mathbb{R}^m \rightarrow \mathbb{R}^n$ is continuous and
\begin{eqnarray*}
\left\| K(\xi,s,x_1) -K(\xi,s,x_2)  \right\| \leq  l_1 \left\| x_1-x_2 \right\|
\end{eqnarray*}for every $(\xi,s,x_1)\in \Omega_0 \times \Omega \times \mathbb{R}^m$ and $(\xi,s,x_2)\in \Omega_0 \times \Omega \times \mathbb{R}^m$ $(l_1 \geq 0)$, then $F(x(\cdot))|(\cdot) \in C(\Omega_0;\mathbb{R}^n)$ for every $x(\cdot)\in B_{\Omega,\mathbb{R}^m,p}(r)$ and $p>1$ where $C(\Omega_0;\mathbb{R}^n)$ is the space of continuous functions $y(\cdot):\Omega_0 \rightarrow \mathbb{R}^n$ with norm $\left\|y(\cdot)\right\|_C=\max \left\{ \left\| y(\xi) \right\|: \xi \in \Omega_0\right\}.$ In this case for every fixed $p_*>1$ the convergence $H_C \left(  \mathcal{F}_{p}(r), \mathcal{F}_{p_*}(r)\right) \rightarrow 0$ as $p\rightarrow p_*$ is satisfied. Here $H_C (\cdot,\cdot)$ denotes the Hausdorff distance between the subsets of the space $C(\Omega_0;\mathbb{R}^n).$
\end{remark}

\end{document}